\numberwithin{equation}{section}
\newtheorem{thm}{Theorem}[section]
\newtheorem{prop}[thm]{Proposition}
\newtheorem{lem}[thm]{Lemma}
\newtheorem{dfn}[thm]{Definition}
\def\ds{\displaystyle}
\def\ol{\overline}
\def\R{\mathbb{R}}
\def\Ri{\mathbb{R}\cup\{+\infty\}}
\def\N{\mathbb{N}}
\def\eps{\varepsilon}
\def\span{\mathrm{span\,}}
\def\argmin{\mathrm{argmin\,}}
\def\be{\begin{equation}}
\def\ee{\end{equation}}
\def\ba{\begin{array}}
\def\ea{\end{array}}
\begin{document}

\title{Perturbation Method in Orlicz Sequence Spaces}
\author{H. Topalova\thanks{The author is supported by the Bulgarian Ministry of Education and Science under the National Program ``Young Scientists and Postdoctoral Students 2''.} \and N. Zlateva\thanks{The author is supported by the Scientific Fund of Sofia University.}}

\date{\today}
\maketitle

\begin{abstract}
  We develop a new perturbation method in Orlicz sequence spaces $\ell_M$ with  Orlicz function $M$ satisfying $\Delta_2$ condition at zero. This result allows one to support from below any bounded below lower semicontinuous function with bounded support, with a perturbation of the defining function $\sigma_M$.

  We give few examples how the method can be used for determining the type of the smoothness of certain Orlicz spaces.

\textbf{Key words:} Orilcz spaces, perturbation space, variational principle

\textbf{2010 Mathematics Subject  Classification:} Primary  46N10; Secondary 35A15, 49J45, 90C48
\end{abstract}

\section{Introduction}
Variational principles are well established and widely used tools in renorming theory of Banach spaces and especially so after the prominent monograph~\cite{DGZ}.

In this article we establish a new perturbation tool -- or variational principle -- fit to the structure of Orlicz sequence spaces. If the Orlicz function $M$ satisfies $\Delta_2$ condition at zero then for each proper lower semicontinuous and bounded below function $f$ on the Orlicz space $\ell_M$ and each $\varepsilon>0$  there is a perturbation
$$
  g_a(x) = \sum_{n=1}^\infty a_nM(x_n)
$$
of the defining function $\sigma_M(x)=\sum_{n=1}^\infty M(x_n)$ such that $0\le a_n < \varepsilon$ for all $n\in\N$ and $f+g_a$ attains a minimum, see for details Theorem~\ref{thm:perturb-general}.

This result is proved by adapting some ideas from \cite{iv-zla-jca,iv-zla-jota}.

The benefit of such perturbation, i.e. a perturbation keeping the form of $\sigma_M$, is that we can adapt the proof of   \cite[Lemma II.5.4]{DGZ}, see also \cite{Stankov}, to finer situations. That is, we apply the above result for the function $f-\sigma_M$, where $f$ is a proper lower semicontinuous and bounded below function with bounded domain (and, therefore, $f-\sigma_M$ is bounded below) to get the following statement, see for details Theorem~\ref{thm:perturb-strict}.

If $f$ is a proper lower semicontinuous and bounded below function with bounded domain then there are $a_n\in[1,2]$ such that the function
$$
  x \to f(x) -  \sum_{n=1}^\infty a_nM(x_n)
$$
attains a minimum.

In other words, we can support $f$ from below with a function very similar to $\sigma_M$. So, if $M$, and thus $\sigma_M$, is ``too rough upwards'' then $f$ can not be ``too smooth'' inside its domain. This idea applied to $f=b^{-2}$, where  $b$ is a nontrivial bump on $\ell_M$, may prove the non-existence of a bump with certain smoothness properties.

 When applied to $\ell_1$ the latter gives that there is no nontrivial Fr\'echet differentiable bump on $\ell_1$, see Proposition~\ref{prop:l1}. We can use this simple example to illustrate an important point. Another way to prove the nonexistence of nontrivial \emph{Lipschitz} Fr\'echet differentiable bump on $\ell_1$ is to use as a perturbation space the Banach space of all bounded Lipschitz continuous and  Fr\'echet differentiable functions and apply \cite[Lemma I.2.5]{DGZ}. But for doing  this one needs global Lipschitz continuity of the bump. Therefore, it is more beneficial  -- here and in general -- to perturb $\sigma_M$, which in this case is the norm. (Note that the proof works for any Orlicz function $M$ satisfying $\limsup_{t\to 0}M(t)/t >0$, but from convexity it follows that such function is equivalent at zero to $|\cdot|$, so the space $\ell_M$ is isomorph to $\ell_1$.)

In a similar simple way we prove in Proposition~\ref{prop:lp} that if  $M$ has $\Delta_2$ at zero and $\limsup_{t\to 0}M(t)/t^p =\infty$ for some $p\in(1,2]$  then on $\ell_M$ there is no nontrivial bump $b$ with the estimate $b(x+h) = b(x) + \langle b'(x),h\rangle + O(\|h\| ^p)$ at each point $x$. In particular, on $\ell_p$, $p\in(1,2]$  there is no nontrivial bump $b$ with the estimate   $b(x+h) = b(x) + \langle b'(x),h\rangle + O(\|h\| ^q)$ for $q>p$. In  \cite[Chapter~V]{DGZ} it is proved  that there is no nontrivial bump with locally $(q{-}1)$-H\"older derivative, so our result apparently is bit more general. Our approach is also more direct, because in \cite{DGZ} a deep result is used to regularise the bump to one which has globally $(q{-}1)$-H\"older derivative.

Our next example, Proposition~\ref{prop:M''}, is somewhat different. From the previous example it immediately follows that on $l_p$ for $p\in (1,2)$ there is no twice Fre\'echet differentiable bump, because two times Fre\'echet differentiability implies the estimate $b(x+h) = b(x) + \langle b'(x),h\rangle + O(\|h\| ^2)$. It is known, see \cite{her-tro}, but not easy to prove that there is even no   twice G\^ateaux differentiable bump. We show that if $M$ has $\Delta_2$ at zero and $\lim _{t\to 0}M''(t)=\infty $, then  there is no twice G\^ateaux differentiable bump on $l_M$. In particular, there is no twice G\^ateaux differentiable bump on $l_p$ for $p\in (1,2)$.

These applications can be proved by Stegall Variational Principle (\cite{stegal1,stegal2} and \cite[Theorem 11.6]{FHHMZ}) as for example in \cite{MNZ}, but the approach here is more straightforward and explicit.

The article is organised as follows. In Section~\ref{sec:abstract} we define the perturbation space, see Definition~\ref{pert_sp} and use it to prove an abstract variational principle in Theorem~\ref{min}. Even more abstract variant is given in Theorem~\ref{thm:min-abstract}. In Section~\ref{sec:Orlicz} we give some facts about Orlicz sequence spaces we need in the sequel and prove Lemma~\ref{lem:sigma-well-posed} which is of independent interest. In Section~\ref{sec:main} we find a perturbation space in Orlicz sequence space where the Orlicz function $M$ is non-degenerate and satisfies $\Delta_2$ condition at $0$, see  Lemma~\ref{petsp}. Theorem~\ref{thm:perturb-general} is a variational principle in such a Orlicz space. The latter is used for proving Theorem~\ref{thm:perturb-strict} which is our tool for  determining the type of the smoothness of certain Orlicz spaces. Such results are presented in the last Section~\ref{sec:appl}.

\section{Abstract perturbation technique}\label{sec:abstract}

We need to refine the main technique of \cite{iv-zla-jca}, because the axioms as formulated there will not do in our context here. We use the opportunity to also somewhat simplify the proof. We start with recalling some definitions.

Let $(X, \| . \| )$ be a Banach space and let $S$ be a nonempty
subset in $X$.
By $\alpha(S)$  we denote the Kuratowski index of non-compactness of $S$ (see \cite{K}), that is the infimum of all $\eps >0$ for which $S$
admits finite (or compact) $\eps$ net.

For a function $f:X\to \Ri$ and $\eps\ge 0$  we set
\[
\Omega _f^S(\eps)=\left\{ x\in S: f(x)\le \inf _S f+\eps\right\}.
\]
 Clearly, $\Omega_{f}^S(0)=\argmin_S f$.

 We will need  the following simpler version of \cite[Lemma 4]{iv-zla-jca}: for any $f,g:X\to \Ri$, bounded below on $S$, and $\delta >0$,
\begin{equation}
  \label{eq:new-lema4}
  \Omega_f^S(\delta)\cap \Omega_g^S(\delta)\ne\varnothing\quad\Longrightarrow\quad
  \Omega_{f+g}^S(\delta)\subset\Omega_f^S(3\delta)\cap \Omega_g^S(3\delta).
\end{equation}
Indeed, let $x\in\Omega_f^S(\delta)\cap \Omega_g^S(\delta)$ and $y\in \Omega_{f+g}^S(\delta)$. Then
$  \delta \ge f(y)+g(y) - \inf(f+g)\ge f(y)+g(y) - (f(x)+g(x)) \ge (f(y)-\inf_Sf-\delta) + (g(y)-\inf_Sg-\delta)$. Since $g(y)-\inf_Sg\ge0$, we derive $\delta\ge f(y)-\inf_Sf - 2\delta$. So, $y\in \Omega_f^S(3\delta)$. In the same way, $y\in \Omega_g^S(3\delta)$.

 \begin{dfn}[\cite{iv-zla-jota}, Definition 4.1]\label{wpmc}
  Let $S\subset X$ and $f:S\to\R\cup\{+\infty\}$. We say that the minimization
  problem
  \begin{equation}
    \label{eq:problem}
    (f,S) \quad \ \left\{\ba{l}
    f(x)\to\min\\
    x\in S\ea \right.
  \end{equation}
  is \emph{well-posed modulus compact}, abbreviated as \emph{wpmc}, if $\inf_S f\in\R$ and
  $\Omega_f^S(0)$ is nonempty and compact set such that if
  $\{x_{n}\}_{n=1}^{\infty}$ is a minimizing sequence:
  \[
  f(x_{n})\to\inf_S f,\quad n\to\infty,\]
  then\[
  d(x_{n},\Omega_f^S(0))\to 0.\]
  \end{dfn}

  The meaning of the term wpmc is quite natural. If $\Omega_f^S(0)$ is a singleton, then   we get the well-known notion of Tikhonov well-posedness (for details
  see e.g. \cite[Chapter I]{zolezzi_book}).
   From Kuratowski Lemma we have, see \cite[Proposition 4.2]{iv-zla-jota}:
  for a closed set $S\subset X$ and a proper lower semicontinuous function $f:S\to\R\cup\{+\infty\}$ bounded below on $S$
  \begin{equation}
    \label{eq:wpmc-characterization}
    (f,S) \text{ is wpmc }\iff \lim_{t\downarrow 0}\alpha(\Omega_{f}^S(t))=0.
  \end{equation}
  Clearly, if $X$ is finite dimensional and $S$ is closed then for any proper lower semicontinuous function $f:S\to\R\cup\{+\infty\}$ bounded below on $S$ the problem $(f,S)$ is wpmc. The idea for general $X$ is to perturb $f$ by a suitable function so that the perturbed problem is wpmc. Here we differ from the usual variational principles, as those considered in \cite{DGZ}, where the problem is perturbed to become Tikhonov well posed.

\begin{dfn}\label{pert_sp}
Let $(X,\|\cdot\|)$ be a Banach space and let $S\subseteq X$ be a non-empty  set. The space of
$(\cal{P},\|\cdot\|_{\cal{P}})$ of real continuous and bounded  on $S$ functions,  is called a \emph{perturbation space on} $S$ if
\begin{itemize}
\item[$\mathrm{(\i)}$] $\cal{P}$ is a convex cone;
\item[$\mathrm{(\i\i)}$]  $\cal{P}$ is complete with respect to the norm $\|\cdot\|_{\cal{P}}$ defined on $\span \cal{P}$,  that dominates the uniform  convergence on $S$, that is, for some $c>0$
\begin{equation}
  \label{eq:dominate-uniform}
  \sup_{x\in S} |g(x)| \le c \|g\|_{\cal{P}},\quad\forall g\in \span \cal{P}.
\end{equation}
In other words, $(\span \cal{P},\|\cdot\|_{\cal{P}})$ is a Banach space of bounded on $S$ continuous functions, and $\cal{P}$ is its positive cone.
\item[$\mathrm{(\i\i\i)}$]  For any $\eps >0$ there exists $\delta>0$    such that for any $x\in S$ there exists $g\in \cal{P}$ (depending on $x$) such that
\begin{equation}
  \label{eq:def:perturb:c}
  \|g\|_{\cal{P}}\le \varepsilon,\ x \in \Omega_g^S(\delta), \text{ and }\ \alpha(\Omega_g^S(3\delta)) \le \varepsilon.
\end{equation}
\end{itemize}
\end{dfn}
Note that in the axioms used in \cite{iv-zla-jca} it is required that $x \in \Omega_g^S(0)$, which is too strong.  That is why we weaken this requirement.

Here is the promised refinement of \cite[Theorem 10]{iv-zla-jca}.
\begin{thm}\label{min}
  Let $(X,\|\cdot\|)$ be a Banach space and let $S\subset X$ be a non-empty closed set in $X$. Let  $(\cal{P},\|\cdot\|_{\cal{P}})$ be a perturbation space on $S$. Let   $f:X\to \Ri$ be a  proper, lower semicontinuous and bounded below on $S$ function.

  Then for any $\eps >0$ there exists $g\in \cal{P}$ such that $\|g\|_{\cal{P}} < \eps$ and
  $(f+g,S)$ is wpmc. In particular, $f+g$ attains its minimum on $S$.
\end{thm}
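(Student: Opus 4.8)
The plan is to build the perturbation $g$ as a uniformly convergent series $g=\sum_{k\ge 1} g_k$ with $\|g_k\|_{\mathcal P}$ small and summing to less than $\eps$, using axiom (iii) repeatedly so that after adding $g_k$ the approximate minima sets $\Omega^S_{f+g_1+\cdots+g_k}(\delta_k)$ shrink in Kuratowski index. Concretely, I would fix a sequence $\eps_k\downarrow 0$ with $\sum_k\eps_k<\eps$, and construct inductively $g_k\in\mathcal P$, points $x_k\in S$, and levels $\delta_k>0$ such that: writing $f_k=f+g_1+\cdots+g_k$ (with $f_0=f$), the point $x_k$ lies in $\Omega^S_{f_{k-1}}(\delta_k)\cap\Omega^S_{g_k}(\delta_k)$, one has $\|g_k\|_{\mathcal P}\le\eps_k$, and $\alpha(\Omega^S_{g_k}(3\delta_k))\le\eps_k$. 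The role of $x_k$ is to guarantee that the hypothesis of the implication~\eqref{eq:new-lema4} is met, so that $\Omega^S_{f_k}(\delta_k)\subset\Omega^S_{f_{k-1}}(3\delta_k)\cap\Omega^S_{g_k}(3\delta_k)$; in particular $\alpha(\Omega^S_{f_k}(\delta_k))\le\eps_k$.

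The induction step is where the care is needed. Given $f_{k-1}$, which is proper, lsc and bounded below on $S$ (bounded below because $g_1,\dots,g_{k-1}$ are bounded on $S$), pick any $x_k\in S$ with $f_{k-1}(x_k)<\inf_S f_{k-1}+\eta$ for a small $\eta>0$ to be chosen. Apply axiom (iii) with parameter $\eps_k$ to obtain the associated $\delta=\delta(\eps_k)$ and then $g_k\in\mathcal P$ (depending on $x_k$) with $\|g_k\|_{\mathcal P}\le\eps_k$, $x_k\in\Omega^S_{g_k}(\delta)$, and $\alpha(\Omega^S_{g_k}(3\delta))\le\eps_k$. Now set $\delta_k=\min\{\delta,\eta\}$ (shrinking $\delta$ only makes the $\Omega$'s smaller, so the bounds persist), and note $x_k\in\Omega^S_{f_{k-1}}(\delta_k)$ by the choice of $x_k$, hence $x_k\in\Omega^S_{f_{k-1}}(\delta_k)\cap\Omega^S_{g_k}(\delta_k)$. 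Additionally I would arrange $\delta_k\le\eps_k$ and, crucially, $\delta_k$ small enough that $3\delta_k$ does not undo the previous shrinking — i.e. also demand $\Omega^S_{f_k}(3\delta_k)$ stays inside the nested family. This is the main obstacle: making the levels $\delta_k$ decrease fast enough that the factor $3$ in~\eqref{eq:new-lema4} is absorbed, while simultaneously keeping each $x_k$ a genuine $\delta_k$-approximate minimizer of $f_{k-1}$; one must choose $\eta$ (hence $\delta_k$) after seeing $\delta(\eps_k)$, and possibly re-examine whether $\Omega^S_{f_{k-1}}(\delta_k)$ remains comparable to the sets built at earlier stages.

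Having built the sequence, I set $g=\sum_{k\ge1}g_k$; by~\eqref{eq:dominate-uniform} the partial sums are uniformly Cauchy, and by completeness of $(\span\mathcal P,\|\cdot\|_{\mathcal P})$ (axiom (ii)) the series converges in $\mathcal P$ to some $g\in\mathcal P$ with $\|g\|_{\mathcal P}\le\sum_k\eps_k<\eps$. Since $g-g_k\ge 0$ is... — more carefully, since each $g_j\ge 0$ on $S$ (they lie in the positive cone), $f+g\ge f_k$ pointwise on $S$ and $f+g-f_k=\sum_{j>k}g_j\to0$ uniformly, so $\inf_S(f+g)=\lim_k\inf_S f_k$ and the $\delta_k$-sublevel sets of $f+g$ are squeezed between those of $f_k$ at comparable levels. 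Concretely, for $t>0$ choose $k$ with $\sum_{j>k}\sup_S g_j<\delta_k/2$ and $\delta_k<t$... and deduce $\Omega^S_{f+g}(t)\subset\Omega^S_{f_k}(3\delta_k)$ up to the usual constants, whence $\alpha(\Omega^S_{f+g}(t))\to0$ as $t\downarrow0$. By the characterization~\eqref{eq:wpmc-characterization} (valid since $S$ is closed and $f+g$ is proper, lsc, bounded below on $S$), $(f+g,S)$ is wpmc; in particular $\Omega^S_{f+g}(0)=\argmin_S(f+g)$ is nonempty, so $f+g$ attains its minimum on $S$.

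Let me add one safeguard to the bookkeeping. To make the squeezing in the last paragraph clean I would additionally impose at stage $k$ that $\sum_{j>k}\eps_j<\delta_k$ (possible since $\delta_k$ is chosen last and the tail can be made as small as we like by shrinking the remaining $\eps_j$'s, or simply by choosing $\eps_j$ decreasing fast from the start and then $\delta_k\ge\sum_{j>k}\eps_j$ is automatic for a suitable pre-chosen sequence). Then for any fixed small $t$, picking $k$ with $3\delta_k<t$... no: picking $k$ with $\delta_k<t$ and using $f_k\le f+g\le f_k+\sum_{j>k}\sup_S g_j\le f_k+c\sum_{j>k}\eps_j<f_k+c\delta_k$, one gets $\Omega^S_{f+g}(\delta_k)\subseteq\Omega^S_{f_k}((1+c)\delta_k)\subseteq\Omega^S_{f_k}(3\delta_k)$ once $\delta_k$ is also taken $\le(2/c)\delta_k$-type bound — i.e. rescale the constant once at the outset so $c$ can be assumed $\le 2$ WLOG, or simply absorb $c$ into the choice of $\eps_k$. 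The resulting $\alpha$-bound is $\le\eps_k\to0$, finishing the argument. The only genuinely delicate point remains the interplay between the factor $3$ from~\eqref{eq:new-lema4} and the rate at which $\delta_k\to0$; everything else is routine bookkeeping of a telescoping construction.
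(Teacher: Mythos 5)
Your route is genuinely different from the paper's: the paper defines the sets $A_n=\{g\in\mathcal{P}:\ \exists t>0,\ \alpha(\Omega^S_{f+g}(t))<1/n\}$, proves they are open and dense, and gets $g$ from the Baire Category Theorem, whereas you build $g=\sum_k g_k$ by an explicit induction and then pass to the uniform limit. Your strategy can be made to work, but as written it has two concrete gaps. First, in the induction step you pick $x_k$ as an $\eta$-approximate minimizer of $f_{k-1}$, then obtain $\delta=\delta(\varepsilon_k)$ and $g_k$ from axiom $\mathrm{(\i\i\i)}$, set $\delta_k=\min\{\delta,\eta\}$, and claim $x_k\in\Omega^S_{f_{k-1}}(\delta_k)\cap\Omega^S_{g_k}(\delta_k)$ because ``shrinking $\delta$ only makes the $\Omega$'s smaller''. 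That justification is backwards: shrinking the level preserves the $\alpha$-bounds but can destroy \emph{membership}, so whichever of $\delta,\eta$ is the smaller, one of the two memberships is unverified and the hypothesis of \eqref{eq:new-lema4} is not established. The fix is the order of quantifiers that the paper exploits: in Definition~\ref{pert_sp}$\mathrm{(\i\i\i)}$ the $\delta$ depends only on $\varepsilon_k$, not on the point, so take $\delta_k:=\delta(\varepsilon_k)$ \emph{first}, then choose $x_k\in\Omega^S_{f_{k-1}}(\delta_k)$ (nonempty since $f_{k-1}$ is bounded below), then get $g_k$ with $x_k\in\Omega^S_{g_k}(\delta_k)$ and $\alpha(\Omega^S_{g_k}(3\delta_k))\le\varepsilon_k$; then \eqref{eq:new-lema4} gives $\alpha(\Omega^S_{f_k}(\delta_k))\le\varepsilon_k$ with no factor-$3$ trouble at all — your worry about absorbing the $3$ across stages is a non-issue because no cross-stage nesting is ever needed. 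Second, your final squeeze lands in $\Omega^S_{f_k}(3\delta_k)$, a set for which you never proved any $\alpha$-bound: the construction controls only $\Omega^S_{f_k}(\delta_k)\subset\Omega^S_{g_k}(3\delta_k)$. The correct bookkeeping is to impose (possible, since the future $\varepsilon_j$ are still free) $c\sum_{j>k}\varepsilon_j<\delta_k/2$, where $c$ is the constant of \eqref{eq:dominate-uniform}; then for $t\le\delta_k/2$ one has $\Omega^S_{f+g}(t)\subset\Omega^S_{f_k}(t+c\sum_{j>k}\varepsilon_j)\subset\Omega^S_{f_k}(\delta_k)$, hence $\alpha(\Omega^S_{f+g}(t))\le\varepsilon_k$, and \eqref{eq:wpmc-characterization} finishes as you say.

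With these repairs your argument is a valid, more constructive alternative: it avoids Baire category entirely, at the price of the tail estimates above. What the paper's proof buys in exchange is exactly the absence of this bookkeeping — density of $A_n$ is your single induction step performed once (with an arbitrary $h\in\mathcal{P}$ in place of the accumulated sum), openness of $A_n$ replaces your tail control, and the conclusion is stronger in spirit: the good perturbations form a dense $G_\delta$ in $\mathcal{P}$, not just a single $g$ with $\|g\|_{\mathcal{P}}<\varepsilon$.
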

\begin{proof}
 Consider for $n\in\N$ the subset $A_n$ of $\cal{P}$ defined by
\begin{equation}\label{eq:an-def}
 A_n:=\left\{ g\in {\cal{P}}:  \exists t>0  :\ \alpha\left(\Omega_{f+g}^S(t)\right) < \frac{1}{n}\right\}.
\end{equation}
We will show that $A_n$ is dense and open in $(\cal{P},\|\cdot\|_{\cal{P}})$. Then by Baire Category Theorem there will be
$g\in \cap A_n$ such that $\|g\|_{\cal{P}}<\eps$ and by \eqref{eq:wpmc-characterization} the problem  $(f+g,S)$ will be  wpmc.

Fix $n\in\N$. Let $g\in A_n$ be arbitrary and let $\beta > 0$ be such that
$$
  \alpha\left(\Omega_{f+g}^S(3\beta)\right) < \frac{1}{n}.
$$
For any $h\in \span\cal{P}$ such that $\|h\|_{\cal{P}}<\beta/2c$ it follows from \eqref{eq:dominate-uniform} that $h(x) - h(y) < \beta$ for any $x,y\in S$ and, therefore, $\Omega_h^S(\beta) = S$. From \eqref{eq:new-lema4} for $(f+g)$ and $h$ it follows that
$$
  \Omega^S_{f+g+h}(\beta) \subset \Omega_{f+g}^S(3\beta),
$$
thus $\alpha (\Omega^S_{f+g+h}(\beta)) < 1/n$ and $g+h\in A_n$. So, $A_n$ is open.

Let now $h\in \cal{P}$ be arbitrary. Fix an arbitrary $\varepsilon \in (0,1/n)$.
Let $\delta > 0$ be provided by $\mathrm{(\i\i\i)}$ of Definition~\ref{pert_sp}. Fix
$$
  x \in \Omega_{f+h}^S (\delta),
$$
and let $g\in \cal{P}$ satisfy \eqref{eq:def:perturb:c}. The latter implies that $x\in \Omega_g^S(\delta)$. From  \eqref{eq:new-lema4} it follows that
$$
  \Omega_{f+h+g}^S(\delta) \subset \Omega_g^S (3\delta) \Longrightarrow \alpha (\Omega_{f+h+g}^S(\delta)) \le \varepsilon.
$$
As $\varepsilon<1/n$, this means that $h+g\in A_n$. Since $\|g\|_{\cal{P}}\le\varepsilon$, see \eqref{eq:def:perturb:c}, the distance from $h$ to $A_n$ is smaller than $\eps$. In other words, $A_n$ is dense in~$\cal{P}$.
\end{proof}

For the purpose of this article we made the axioms in Definition~\ref{pert_sp} more immediate to check rather than most general. It is worth however to point out how our approach relates to the more general topological context of~\cite{rev-las} where  variational principles are derived from the properties of the $\varepsilon$-argmin map
$$
  g \to \Omega_{f+g} (\varepsilon).
$$
In such context the abstract of our approach is as follows.
\begin{dfn}\label{def:cantor-mes}
  Let $S$ be a non-empty closed subset of the Banach space~$X$. We say that the positive function $\gamma$ defined on the closed subsets of $S$ is \emph{Cantor measure} if
  \begin{itemize}
    \item[$\mathrm{(\i)}$] $\gamma$ is monotone with respect to  inclusion, that is,
    $$
      A\subset B\subset S \Rightarrow \gamma(A) \le \gamma(B).
    $$
    \item[$\mathrm{(\i\i)}$] $\gamma$ satisfies the Cantor Lemma in the sense that if the non-empty subsets $(A_n)_1^\infty$ of $S$ are nested and closed, that is, $A_{n+1}\subset A_n$ for all $n\in\mathbb{N}$, then
    $$
      \lim_{n\to\infty} \gamma(A_n) = 0\ \Longrightarrow\ \bigcap_{n=1}^\infty A_n \neq\varnothing.
    $$
  \end{itemize}
\end{dfn}
Examples of Cantor measures are e.g. the classical one $\mathrm{diam}\, A$, the Kuratovski index of non-compactness $\alpha(A)$ with which we predominantly work here, and the similar De Blasi  index of  non-weak-compactness $\beta (A) = \inf \{ \varepsilon >0:\ A\subset K+\varepsilon B_X,$ where $K\subset X$ is weakly compact set$\}$, see \cite{de-blasi}.

We can call the problem $(f,S)$, see \eqref{eq:problem}, $\gamma$-well-posed if
$$
  \lim_{\varepsilon\to0}\gamma(\Omega^S_f(\varepsilon)) = 0,
$$
where $\gamma$ is a Cantor measure on $S$. Obviously, a $\gamma$-well-posed problem $(f,S)$ has a solution, that is, $\Omega^S_f(0)\neq\varnothing$.

The even more abstract version of Theorem~\ref{min} is:
\begin{thm}\label{thm:min-abstract}
  Let $S$ be a non-empty closed subset of the Banach space $X$ and let $\gamma$ be a Cantor measure on $S$.

  Let $(\mathcal{P},+,\rho)$ be a compete metric semigroup with zero. Let $\mathcal{P}_1 = \mathcal{P}\cup\{f\}$ and let $(\mathcal{P}_1,+)$ be a semigroup. Let the maps
  $$
    T_\varepsilon: \mathcal{P}_1\to 2^S,
  $$
  where $\varepsilon > 0$, satisfy
  \begin{equation}
    \label{eq:min-abs-1}
    \ T_\delta (u) \subset T_\varepsilon (u),\quad \forall u\in \mathcal{P}_1, \ \forall 0<\delta<\varepsilon,
  \end{equation}
  \begin{multline}
    \label{eq:min-abs-2}
    \forall u,v\in \mathcal{P}_1, \ \forall \delta>0: T_{\delta}(u)\cap T_{\delta}(v)\neq\varnothing \Rightarrow\\
    T_{\delta}(u+v) \subset T_{3\delta}(u)\cap T_{3\delta}(v),
  \end{multline}
  \begin{equation}
    \label{eq:min-abs-3}
    \forall\varepsilon>0\ \exists \delta>0:\ \rho(0,g) < \delta \Rightarrow T_\varepsilon (g) = S,
  \end{equation}
  \begin{multline}
    \label{eq:min-abs-4}
    \forall \varepsilon>0\ \exists\delta > 0: \ \forall x\in S\ \exists g\in\mathcal{P}:\\
    \rho(0,g)\le\varepsilon, \ x\in T_{\delta}(g)\text{ and }
    \gamma (T_{3\delta}(g)) \le  \varepsilon.
  \end{multline}
  Then the set $U\subset\mathcal{P}$ defined as
  $$
    g\in U \iff \bigcap_{\varepsilon>0}T_{f+g}(\varepsilon) \neq\varnothing
  $$
  is a dense $G_\delta$ subset of $\mathcal{P}$.
\end{thm}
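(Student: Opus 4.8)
The plan is to follow, almost verbatim, the Baire category argument in the proof of Theorem~\ref{min}, with the Kuratowski index $\alpha$ replaced by the abstract Cantor measure $\gamma$, the maps $\eps\mapsto\Omega_{f+g}^S(\eps)$ replaced by $\eps\mapsto T_\eps(f+g)$, the inequality \eqref{eq:new-lema4} replaced by its postulate \eqref{eq:min-abs-2}, the uniform-domination property \eqref{eq:dominate-uniform} replaced by \eqref{eq:min-abs-3}, and the characterization \eqref{eq:wpmc-characterization} of well-posedness replaced directly by part $\mathrm{(\i\i)}$ of Definition~\ref{def:cantor-mes}. Concretely, for $n\in\N$ I would set
\[
  A_n:=\bigl\{\,g\in\mathcal{P}\ :\ \exists\,t>0\ \text{ with }\ \gamma\bigl(T_t(f+g)\bigr)<1/n\,\bigr\},
\]
show each $A_n$ is open and dense in $(\mathcal{P},\rho)$, invoke Baire's theorem to get that $\bigcap_nA_n$ is a dense $G_\delta$, and finally check via the Cantor Lemma that $\bigcap_nA_n\subseteq U$; this $\bigcap_nA_n$ is in fact exactly the set of $g$ for which $(f+g,S)$ is $\gamma$-well-posed, which is the natural sharpening of the conclusion. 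Throughout, one uses that the sets $T_\eps(u)$ are closed (forced by $\gamma$ being defined only on closed subsets of $S$) and, as in the intended applications where $T_\eps(u)=\Omega_u^S(\eps)$ for a proper bounded-below $u$, one may assume $T_\eps(u)\neq\varnothing$ for $\eps>0$, otherwise appealing to $\gamma(\varnothing)=0$.

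For openness, given $g\in A_n$ I would choose $t>0$ with $\gamma(T_t(f+g))<1/n$ and put $\beta:=t/3$; by \eqref{eq:min-abs-3} there is $\delta_0>0$ with $T_\beta(h)=S$ whenever $\rho(0,h)<\delta_0$, and then for such $h$ one has $T_\beta(f+g)\cap T_\beta(h)=T_\beta(f+g)\neq\varnothing$, so \eqref{eq:min-abs-2} at level $\beta$ gives $T_\beta(f+g+h)\subseteq T_{3\beta}(f+g)$, whence by monotonicity of $\gamma$ we get $\gamma(T_\beta(f+g+h))<1/n$, i.e. $g+h\in A_n$. For density, I would fix $h\in\mathcal{P}$ and $\eta>0$, pick $\eps\in(0,1/n)$ so small that (using continuity of $+$ at $(h,0)$ and $h+0=h$) $\rho(0,g)\le\eps$ forces $\rho(h,h+g)<\eta$, let $\delta>0$ be supplied by \eqref{eq:min-abs-4} for this $\eps$, take any $x\in T_\delta(f+h)$, and let $g\in\mathcal{P}$ be the element from \eqref{eq:min-abs-4} for this $x$, so $\rho(0,g)\le\eps$, $x\in T_\delta(g)$ and $\gamma(T_{3\delta}(g))\le\eps$. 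Then $x\in T_\delta(f+h)\cap T_\delta(g)$, so \eqref{eq:min-abs-2} gives $T_\delta(f+h+g)\subseteq T_{3\delta}(g)$, hence $\gamma(T_\delta(f+h+g))\le\eps<1/n$ and $h+g\in A_n$ with $\rho(h,h+g)<\eta$.

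Once $A_n$ is open and dense, Baire gives a dense $G_\delta$ set $\bigcap_nA_n$. To see $\bigcap_nA_n\subseteq U$, take $g$ there, choose for each $n$ some $t_n>0$ with $\gamma(T_{t_n}(f+g))<1/n$, and replace $t_n$ by $\min\{t_1,\dots,t_n,1/n\}$: by \eqref{eq:min-abs-1} and monotonicity of $\gamma$ the inequality $\gamma(T_{t_n}(f+g))<1/n$ survives, while now $t_n\downarrow0$. The sets $B_n:=T_{t_n}(f+g)$ are then closed, nested by \eqref{eq:min-abs-1}, and $\gamma(B_n)\to0$, so part $\mathrm{(\i\i)}$ of Definition~\ref{def:cantor-mes} gives $\bigcap_nB_n\neq\varnothing$; since $t_n\downarrow0$ and $T_\eps(f+g)$ is monotone in $\eps$, one has $\bigcap_nB_n=\bigcap_{\eps>0}T_\eps(f+g)$, so $g\in U$.

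The step I expect to need the most care, beyond a verbatim transcription of the proof of Theorem~\ref{min}, is the openness of $A_n$: there one must pass from $g\in A_n$ to an \emph{arbitrary} nearby $g'\in\mathcal{P}$, and in Theorem~\ref{min} this is done by writing $g'=g+h$ with $\|h\|_{\mathcal{P}}$ small inside the ambient vector space $\span\mathcal{P}$. In the purely semigroup setting one must use instead that every $\rho$-neighbourhood of $g$ has the form $\{g+h:\rho(0,h)<\delta\}$ — i.e. translation invariance of $\rho$ — which is what the ``complete metric semigroup with zero'' hypothesis should be read as providing, and which is automatic in the Banach-space cone situations of Lemma~\ref{petsp} and Theorem~\ref{min}. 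Everything else — the three-fold bookkeeping forced by \eqref{eq:min-abs-2} and \eqref{eq:min-abs-4} (note the choice $\beta=t/3$), and the Cantor Lemma passage — is routine.
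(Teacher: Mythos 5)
Your proposal is correct and coincides with the paper's own (only sketched) proof: the paper establishes Theorem~\ref{thm:min-abstract} by repeating verbatim the Baire category argument of Theorem~\ref{min} for the sets $A_n$, adding only the remark that, by continuity of the addition, $\{g+h:\ \rho(0,h)<\delta\}$ is a neighbourhood of $g$ --- precisely the point you flag in your discussion of openness. Your explicit Cantor--Lemma passage from $\gamma\bigl(T_{t_n}(f+g)\bigr)\to 0$ with $t_n\downarrow 0$ to $\bigcap_{\varepsilon>0}T_\varepsilon(f+g)\neq\varnothing$ is exactly the detail the paper leaves implicit, so the two arguments are essentially the same.
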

Clearly, if ${\cal P}$ is a perturbation space, and $f$ is a proper lower semicontinuous and bounded below on $S$ function, Theorem~\ref{thm:min-abstract} implies Theorem~\ref{min}.

Theorem~\ref{thm:min-abstract} can be proved following exactly the proof of Theorem~\ref{min}, that is, by showing that the sets
$$
  A_n := \left\{ g\in {\cal{P}}:  \exists s>0  :\ \gamma\left(T_s\right) < \frac{1}{n}\right\}
$$
are dense and open. For the openness we have to add only that, because the addition is continuous, the set $\{g+h:\ \rho(0,h)<\delta\}$ is a neighbourhood of $g$ in~$\mathcal{P}$.

\section{Some facts about Orlicz sequence spaces}\label{sec:Orlicz}
 Here we recall some well known facts about Orlicz sequence spaces, see e.g. \cite{LT1,HJ}.

An Orlicz function $M$ is a continuous non-decreasing and convex
function defined for $t\ge 0$ such that $M(0)=0$ and $\lim_{t\to \infty} M(t)=\infty$. If $M(t)=0$ for
some $t> 0$, $M$ is said to be a degenerate Orlicz function.

We represent the elements of the space $\ell_\infty$ of bounded sequences as\linebreak
 $x=(x_1,x_2,\ldots,x_n,\ldots)$ or
$$
  x = \sum_{n=1}^\infty x_n e_n,
$$
where $(e_n)_1^\infty$ is the canonical basis of $c_0$ -- the space of tending to zero sequences. Below we will clarify the exact convergence meaning of this sum in our context. We denote  for $x\in\ell_\infty$ and $N\in\mathbb{N}$,
\begin{equation}
  \label{eq:proj:def}
  P_Nx := \sum_{i=1}^N x_ie_i,\quad P_N^\bot x := x - P_Nx = \sum_{i=N+1}^\infty x_ie_i.
\end{equation}

The even convex function $\sigma_M:\ell_\infty \to [0,\infty]$ is defined
 by
 \begin{equation}
  \label{eq:def-sigma}
  \sigma_M(x):= \sum_{n=1}^\infty  M(|x_n|).
 \end{equation}

The Orlicz sequence space $\ell_M$ is the linear subspace of $\ell_\infty$ consisting of all $x\in \ell_\infty$ satisfying
 $\sigma_M(x/\rho)<\infty$ for some $\rho>0$. Equipped with the norm given by the Minkowski
functional of $\{x\in \ell_\infty:\sigma_M(x)\le 1\}$, i.e.
\[
\|x\|:=\inf \left\{ \rho>0: \sigma_M\left(\frac{x}{\rho}\right)\le 1\right\},
\]
the space $\ell_M$ is a Banach space. If $M$ is degenerate then $\ell_M \equiv \ell_\infty$ and this case is not interesting for us. If $M$ is non-degenerate then, of course, $\ell_M\subset c_0$.

From the
convexity of $\sigma_M$ we have
\be\label{F}
\sigma_M(x)\le \|x\|,\text{ if }\|x\|\le 1;\quad \sigma_M(x) > \|x\|,\text{ if }\|x\| > 1.
\ee
The non-degenerate Orlicz function $M$  \emph{satisfies the $\Delta_2$ condition at zero} if
\begin{equation}
  \label{eq:def-delta-2-limsup}
 \liminf_{t\downarrow 0}\frac{M(t)}{M(2t)} > 0.
\end{equation}
By the end of this section we work with  a non-degenerate Orlicz function $M$ satisfying the $\Delta_2$ condition at zero.

Fix $\ol t$ such that
\begin{equation}
  \label{eq:ove-t}
  \ol t > 1, \text{ and }M(\ol t) > 1.
\end{equation}
Since the function $M(t)/M(2t)$ is continuous on $(0,\ol t]$, from \eqref{eq:ove-t} it follows that $\ds \inf_{0< t \le \ol t}M(t)/M(2t) > 0$ and, therefore, there is $C \in (0,1)$ such that
\begin{equation}
  \label{eq:delta-2}
M(t)\ge CM(2t),\quad \forall t\in [0,\ol t].
\end{equation}

Note that from \eqref{eq:def-sigma}, \eqref{eq:ove-t}, \eqref{eq:delta-2} and $B_{\ell_M} = \{x:\ \sigma_M(x)\le 1\}$, see \eqref{F},  it immediately follows that
  \begin{equation}
    \label{eq:sigma-delta-2}
    \sigma_M(x) \ge C \sigma_M(2x),\quad \forall x\in B_{\ell_M}.
  \end{equation}

 Let $m\in\mathbb{N}$ and let $x\in\ell_M$ be  such that $\|x\|\le 2^m$, that is, $\sigma_M(2^{-m}x) \le 1$. In particular, $M(2^{-m}|x_n|) \le 1$ for all $n$, so \eqref{eq:ove-t} gives $|x_n|\le 2^m\ol t$ for all $n\in\mathbb{N}$.
  Let $A\subset\mathbb{N}$ be the set of those $n$ for which $|x_n|\ge \ol t$. Because $x\in c_0$ the set $A$ is finite. Let $|A|$ be the number of elements of $A$. Clearly, $1 \ge \sigma_M(2^{-m}x) \ge \sum_{n\in A} M(2^{-m}|x_n|)\ge |A|M(2^{-m}\ol t)$.
  So, $|A|\le 1/M(2^{-m}\ol t)$, and $\sum_{n\in A} M(|x_n|) \le M(2^m\ol t)/M(2^{-m}\ol t)$.
  On the other hand, \eqref{eq:delta-2} iterated gives $1 \ge \sigma_M(2^{-m}x) \ge \sum_{n\not\in A} M(2^{-m}|x_n|) \ge C^m\sum_{n\not\in A} M(|x_n|)$.  So, $\sum_{n\not\in A} M(|x_n|)\le C^{-m}$ and $\sigma_M(x)\le C^{-m}+M(2^m\ol t)/M(2^{-m}\ol t)$ for all $x\in 2^m B_{\ell_M}$.
  Consequently,
  \begin{equation}
    \label{eq:sigma-finite}
    \sup _{KB_{\ell_M}}\sigma_M < \infty,\quad\forall K>0.
  \end{equation}
 Also, provided $M$ is non-degenerate and satisfying the $\Delta_2$ condition at zero, $\sigma_M$ is convex and bounded on the bounded sets and, therefore, continuous  (see e.g. Lemma~\ref{lem:Lips}). In this case, \eqref{F} implies  that
  \begin{equation}
    \label{eq:sphere}
    \|x\| = 1 \iff \sigma_M(x) = 1.
  \end{equation}

We need the $\Delta_2$ condition mostly because  of the following lemma.
\begin{lem}\label{lem:sigma-well-posed}
  Let $M$ be a non-degenerate Orlicz function. Then $M$ satisfies the $\Delta_2$ condition at zero
  if and only if $\sigma_M$ has a strong minimum at zero, or, in other words, the problem
  $$
    \sigma_M(x) \to \min,\quad x\in\ell_M
  $$
  is Tikhonov well posed.
 \end{lem}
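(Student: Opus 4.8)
The plan is to treat the two implications separately, using the inequality \eqref{eq:sigma-delta-2} for the forward direction and an explicit construction of a bad minimizing sequence for the reverse one. Before that, observe that since $M$ is non-degenerate we have $M(t)>0$ for every $t>0$, hence $\sigma_M(x)=0$ forces $x=0$; so $\inf_{\ell_M}\sigma_M=0=\sigma_M(0)$ and $\argmin\sigma_M=\{0\}$ regardless. Thus ``$\sigma_M$ has a strong minimum at zero'' is equivalent to the statement that every sequence $(x_k)\subset\ell_M$ with $\sigma_M(x_k)\to0$ satisfies $\|x_k\|\to0$, and this is what I will prove equivalent to the $\Delta_2$ condition.

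For the implication ``$\Delta_2\Rightarrow$ strong minimum'' I would fix $\eps>0$, choose $m\in\N$ with $2^{-m}\le\eps$, and let $C\in(0,1)$ be the constant from \eqref{eq:delta-2}. The key claim is: if $\sigma_M(x)\le C^m$ then $\|x\|\le 2^{-m}$. To see this I iterate \eqref{eq:sigma-delta-2}: by induction on $j=0,1,\dots,m$ I show that $2^jx\in B_{\ell_M}$ and $\sigma_M(2^jx)\le C^{m-j}$. The base case is the hypothesis; for the step, $C^{m-j}\le C<1$ when $j<m$ keeps $2^jx$ inside $B_{\ell_M}$, where \eqref{eq:sigma-delta-2} applies and yields $\sigma_M(2^{j+1}x)\le\sigma_M(2^jx)/C\le C^{m-j-1}$. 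At $j=m$ this gives $\sigma_M(2^mx)\le1$, hence $\|2^mx\|\le1$ by \eqref{F}, i.e. $\|x\|\le 2^{-m}\le\eps$. Now if $\sigma_M(x_k)\to0$ then eventually $\sigma_M(x_k)\le C^m$, so $\|x_k\|\le\eps$; since $\eps$ was arbitrary, $\|x_k\|\to0$.

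For the converse I argue by contraposition. Assume $M$ fails the $\Delta_2$ condition at zero, so by \eqref{eq:def-delta-2-limsup} there is a sequence $t_k\downarrow0$ with $M(t_k)/M(2t_k)\to0$ (and $M(t_k)\to0$ by continuity of $M$ at $0$). Put $N_k:=\lceil 1/M(2t_k)\rceil\in\N$ and let $x^{(k)}$ be the finitely supported vector whose first $N_k$ coordinates equal $t_k$ and whose remaining coordinates are $0$; then $x^{(k)}\in\ell_M$. One computes $\sigma_M(x^{(k)})=N_kM(t_k)\le M(t_k)/M(2t_k)+M(t_k)\to0$, so $(x^{(k)})$ is a minimizing sequence for $\sigma_M$; on the other hand $\sigma_M(2x^{(k)})=N_kM(2t_k)\ge1$, which forces $\|2x^{(k)}\|\ge1$ via \eqref{F}, i.e. $\|x^{(k)}\|\ge1/2$. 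Hence this minimizing sequence does not converge to the unique minimizer $0$, and so $\sigma_M$ does not have a strong minimum at zero.

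I do not expect a serious difficulty. The only point needing care is the induction in the forward direction: one must verify at every step that $2^jx$ still lies in $B_{\ell_M}$, which is exactly what the bound $\sigma_M(2^jx)\le C^{m-j}\le C<1$ provides, so that \eqref{eq:sigma-delta-2} remains applicable; everything else is a short computation.
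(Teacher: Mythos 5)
Your proof is correct and follows essentially the same route as the paper: the forward direction iterates \eqref{eq:sigma-delta-2} to show that small $\sigma_M$ forces small norm, and the converse uses the same block construction $x^{(k)}=t_k(e_1+\dots+e_{N_k})$ with $N_k\approx 1/M(2t_k)$ to produce a minimizing sequence bounded away from $0$. The only differences are cosmetic: you take the ceiling instead of the integer part (getting $\|x^{(k)}\|\ge 1/2$ directly rather than $\|x^{(k)}\|\to 1/2$), and you iterate the inequality upward with an explicit induction instead of the paper's downward iteration combined with normalization to the unit sphere via \eqref{eq:sphere}.
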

\begin{proof}
  Let $M$ be a non-degenerate Orlicz function that  satisfies the $\Delta_2$ condition at zero. Iterating on \eqref{eq:sigma-delta-2} we get
  $$
    \sigma_M\left(\frac{x}{2^m}\right) \ge C\sigma_M\left(\frac{x}{2^{m-1}}\right) \ge\ldots\ge C^m\sigma_M(x),\quad\forall x\in B_{\ell_M},\ \forall m\in\mathbb{N}.
  $$
  So, if $y\in\ell_M$ is such that $\|y\| > 2^{-m}$, then using \eqref{eq:sphere},
  $$
    \sigma_M(y) > \sigma_M \left(\frac{y}{2^m\|y\|}\right)\ge C^m\sigma_M\left(\frac{y}{\|y\|}\right)=C^m.
  $$
Therefore,
$$
  \Omega_{\sigma_M}(C^m) \subset 2^{-m}B_{\ell_M},
$$
and
\begin{equation}
  \label{eq:diam-omega-0}
  \mathrm{diam}\, \Omega_{\sigma_M}(t) \to 0,\text{ as }t\to 0.
\end{equation}
Assume now that $M$ fails \eqref{eq:def-delta-2-limsup}, that is, there is a sequence $t_k\searrow0$ such that
\begin{equation}
  \label{eq:M2tk}
  \frac{M(t_k)}{M(2t_k)} < \frac{1}{k}.
\end{equation}
We may, of course, assume that $2t_k < 1$ and $M(2t_k) < 1$ for all $k\in\mathbb{N}$.
Define
\begin{equation}
  \label{eq:xk:def}
  i_k:=[M(2t_k)^{-1}],\quad x^k := \sum_{i=1}^{i_k}t_ke_i.
\end{equation}
By definition,
$$
  \sigma_M(2x^k) = \sum_{i=1}^{i_k} M(2t_k) = i_kM(2t_k).
$$
But $i_k \le 1/M(2t_k)$, so $\sigma_M(2x^k)\le 1$. Also, $i_k > 1/M(2t_k) -1$ and, because $M(2t_k)\to 0$, we have $\sigma_M(2x^k)\to 1$. From \eqref{F} it follows that
$$
  \|2x^k\|\to 1,\text{ as }k\to\infty\iff \|x^k\|\to 1/2.
$$
On the other hand, from \eqref{eq:M2tk}
$$
  \sigma_M(x^k) = i_kM(t_k) \le \left(\frac{1}{M(2t_k)}+1\right)M(t_k) < \frac{1}{k}+M(t_k) \to 0.\qedhere
$$
\end{proof}
If $M$ is non-degenerate and satisfies the $\Delta_2$ condition at zero we can derive  one more important implication from \eqref{eq:diam-omega-0}.
Let $x\in\ell_M$. From \eqref{eq:sigma-finite} it follows that $\sigma_M(P_N^\bot x)\to 0$ as $N\to\infty$ and from \eqref{eq:diam-omega-0} it follows that
$$
  \lim_{N\to\infty} \|P_N^\bot x\| = 0 \iff \lim_{N\to\infty}\left\|x - \sum_{i=1}^N x_ie_i\right\| = 0,
$$
that is, $(e_n)_1^\infty$ is a basis of $\ell_M$.

\section{Main results}\label{sec:main}
 We are going to state and proof our main results concerning a very specific perturbation method for Orlicz sequence spaces with $\Delta_2$. To ease the annotation, we fix throughout this section a non-degenerate Orlicz function $M$ satisfying $\Delta_2$ condition at zero, and we  denote $\sigma(\cdot):=\sigma_M(\cdot)$. We also set $X:=\ell_M$. For the same reason of simplifying the annotation, let
 \begin{equation}
    \label{eq:nu}
    \nu(t) := \sup\{ \sigma(x):\ \|x\|\le t\} < \infty,
 \end{equation}
 see \eqref{eq:sigma-finite}, and
 \begin{equation}
    \label{eq:fi}
    \varphi(t) := \mathrm{diam}\, \Omega_\sigma(t),\quad\text{ so } \varphi(t)\to 0,\text{ as }t\to 0,
 \end{equation}
 see \eqref{eq:diam-omega-0}.

For $a\in \ell_\infty$ we define the real function $g_a$ on $X$ by
\[
    g_a(x):=\sum _{n=1}^\infty a_n M(|x_n|).
\]
In this way, $\sigma = g_{(1,1,1,\ldots)}$. Recall that $ \|a\|_\infty = \sup_n |a_n|$ for $a\in \ell_\infty$.
\begin{lem}
    \label{lem:Lips}
    The function $g_a$ is bounded and Lipschitz on $KB_X$ for each $K > 0$. More precisely,
    \begin{equation}
        \label{eq:g-a-bound}
        \sup_{KB_X}|g_a|\le \|a\|_\infty\nu(K),
    \end{equation}
    and
    \begin{equation}
        \label{eq:g-a-Lipsch}
        |g_a(x)-g_a(y)| \le 2\|a\|_\infty \nu(K+1)\|x-y\|,\quad\forall x,y\in KB_X.
    \end{equation}
\end{lem}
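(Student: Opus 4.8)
The plan is to read off \eqref{eq:g-a-bound} directly from the definition of $g_a$ and then obtain \eqref{eq:g-a-Lipsch} from convexity, by extending segments a little outside $KB_X$.

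For \eqref{eq:g-a-bound}: if $\|x\|\le K$ then
\[
  |g_a(x)|\le\sum_{n=1}^\infty|a_n|\,M(|x_n|)\le\|a\|_\infty\sum_{n=1}^\infty M(|x_n|)=\|a\|_\infty\,\sigma(x)\le\|a\|_\infty\,\nu(K),
\]
where the last inequality uses the definition \eqref{eq:nu} of $\nu$ and its monotonicity. In particular the series defining $g_a$ converges absolutely, so $g_a$ is a well-defined real function, and the finiteness of $\nu$ (i.e. \eqref{eq:sigma-finite}) is exactly what makes the bound meaningful.

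For \eqref{eq:g-a-Lipsch} I would first assume $a_n\ge0$ — which already covers $\sigma=g_{(1,1,1,\dots)}$ and every perturbation $g_a$ added to it in the sequel — and then reduce a general $a\in\ell_\infty$ to this case by splitting $a=a^+-a^-$ with $a^\pm\ge0$ and $\|a^\pm\|_\infty\le\|a\|_\infty$. When $a_n\ge0$, each summand $x\mapsto a_nM(|x_n|)$ is convex, being $a_n$ times the composition of the non-decreasing convex function $M$ with the convex function $x\mapsto|x_n|$; hence $g_a$ is convex, and by \eqref{eq:g-a-bound} it is bounded by $\|a\|_\infty\nu(K+1)$ on $(K+1)B_X$. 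Now fix $x\ne y$ in $KB_X$, put $u:=(x-y)/\|x-y\|$ and $z:=x+u$, so that $\|z\|\le\|x\|+1\le K+1$; then $x$ lies on the segment $[y,z]$, indeed $x=(1-s)y+sz$ with $s:=\|x-y\|/(\|x-y\|+1)\in[0,1)$, and convexity of $g_a$ gives
\[
  g_a(x)-g_a(y)\le s\bigl(g_a(z)-g_a(y)\bigr)\le 2s\,\|a\|_\infty\,\nu(K+1)\le 2\,\|a\|_\infty\,\nu(K+1)\,\|x-y\|,
\]
where I used $z,y\in(K+1)B_X$ together with \eqref{eq:g-a-bound}, and $s\le\|x-y\|$. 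Interchanging $x$ and $y$ gives the matching estimate for $g_a(y)-g_a(x)$, which is \eqref{eq:g-a-Lipsch}.

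The one point to watch is that the clean constant in \eqref{eq:g-a-Lipsch} is produced precisely by convexity, hence really needs the coefficients to be nonnegative: for sign-changing $a$ the splitting above costs a factor, unless one notes that $g_a(x)-g_a(y)=\sigma(\tilde x)-\sigma(\tilde y)$ for $\tilde x,\tilde y$ obtained by swapping some coordinates of $x$ and $y$, which returns matters to the convex case. Everything else — absolute convergence of the series, convexity of the individual summands, and the one-line segment extension — is routine, and the $\Delta_2$ condition at zero enters only through the finiteness of $\nu$ established earlier.
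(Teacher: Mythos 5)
Your argument follows essentially the paper's route: the bound \eqref{eq:g-a-bound} read off from $|g_a|\le\|a\|_\infty\sigma$, then convexity plus local boundedness giving Lipschitzness for $a\ge 0$ (the paper does this via the directional-derivative estimate $g_a'(x;h)\le\|a\|_\infty\nu(K+1)$, you via an explicit segment extension -- the same idea), and finally the decomposition $a=a_+-a_-$. The substance is fine; the one genuine defect is the constant in \eqref{eq:g-a-Lipsch} for sign-changing $a$. In your nonnegative case you estimate $g_a(z)-g_a(y)\le 2\|a\|_\infty\nu(K+1)$, i.e.\ you already spend the factor $2$ there, so after splitting $g_a=g_{a_+}-g_{a_-}$ you only reach $4\|a\|_\infty\nu(K+1)$, not the stated bound. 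Your proposed rescue is also off: the identity $g_a(x)-g_a(y)=\sigma(\tilde x)-\sigma(\tilde y)$ should read $g_{|a|}(\tilde x)-g_{|a|}(\tilde y)$, and, more importantly, the coordinate-swapped points need not remain in $KB_X$ (in $\ell_1$ take $x=e_1$, $y=e_2$ with $a_1\ge0>a_2$: then $\tilde x=e_1+e_2$ has norm $2$), only in $2KB_X$, so this route would replace $\nu(K+1)$ by $\nu(2K+1)$.

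The repair is exactly the point the paper exploits: for $a\ge0$ you have $g_a\ge0$, so in your segment argument
\begin{equation*}
  g_a(x)-g_a(y)\le s\bigl(g_a(z)-g_a(y)\bigr)\le s\,g_a(z)\le \|a\|_\infty\nu(K+1)\,\|x-y\|,
\end{equation*}
i.e.\ Lipschitz constant $\|a\|_\infty\nu(K+1)$ in the nonnegative case; then the splitting $a=a_+-a_-$, with $\|a_\pm\|_\infty\le\|a\|_\infty$, yields precisely $2\|a\|_\infty\nu(K+1)$ as in \eqref{eq:g-a-Lipsch}. With this one-line change your proof matches the lemma, and incidentally your direct derivation of \eqref{eq:g-a-bound} for arbitrary $a$ (without splitting) is cleaner than deducing it from the positive case.
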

\begin{proof}
    Assume first that $a\in\ell_\infty^+$, that is $a_n\ge0$ for all $n\in\mathbb{N}$. Then $g_a\ge 0$ and
    $$
        g_a(x) = \sum _{n=1}^\infty a_n M(|x_n|) \le \|a\|_\infty\sum _{n=1}^\infty M(|x_n|) = \|a\|_\infty\sigma(x),
    $$
    so \eqref{eq:g-a-bound} is fulfilled. Since $g_a$ is convex, it has directional derivatives $g_a'(x;h)$. If $\|x\| \le 1$ and $\|h\|=1$ then by convexity $g_a(x+h) \ge g_a(x) + g_a'(x;h)$. But $x+h\in (K+1)B_X$, so $g_a(x+h)\le \|a\|_\infty\nu(K+1)$, and $g_a(x)\ge 0$; thus $g_a'(x;h) \le \|a\|_\infty\nu(K+1)$ for all $x\in KB_X$ and all $h$ of norm $1$, meaning that $g_a$ is Lipschitz on $KB_X$ with Lipschitz constant $\|a\|_\infty\nu(K+1)$.

    Now, for arbitrary $a\in\ell_\infty^+$ we use the canonical representation $a = a_+ - a_-$, where $(a_+)_n = \max\{a_n,0\}$, so $(a_-)_n = -\min\{a_n,0\}$. Accordingly, $g_a = g_{a_+} - g_{a_-}$ and, since $a_\pm\in \ell_\infty^+$ and $\|a\|_\infty = \max\{\|a_+\|,\|a_-\|\}$, the claim follows.
\end{proof}
Set
\[
    \mathcal{P}:= \left\{ g_a\ :\  a\in  \ell_\infty^+ \right\},
\]
the latter being the cone of positive bounded sequences. We consider $\mathrm{span}\, \mathcal{P}$, that is, the set of all functions $g_a$. Let $\|g\|_{\cal P}=\|a\|_\infty$.

\begin{lem}\label{petsp}
The so defined  $(\cal{P},\|\cdot \|_\mathcal{P})$ is a perturbation space on each non-empty, closed and bounded subset of $X$.
\end{lem}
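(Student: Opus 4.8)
I need to verify that $(\mathcal{P}, \|\cdot\|_{\mathcal{P}})$ satisfies the three axioms (i)--(iii) of Definition~\ref{pert_sp} on an arbitrary non-empty, closed, bounded set $S\subset X$. Axiom (i) is essentially free: $\mathcal{P} = \{g_a : a\in\ell_\infty^+\}$ is a convex cone because $\ell_\infty^+$ is, and $g_{a+b} = g_a + g_b$, $g_{\lambda a} = \lambda g_a$ for $\lambda\ge 0$. By Lemma~\ref{lem:Lips} every $g_a$ is continuous and bounded on $S$ (since $S\subset KB_X$ for some $K$), so $\mathcal{P}$ indeed consists of continuous bounded functions on $S$. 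For axiom (ii), I would note that $\mathrm{span}\,\mathcal{P} = \{g_a : a\in\ell_\infty\}$ and the map $a\mapsto g_a$ is linear; it is injective because evaluating $g_a$ at the scaled basis vectors $t e_n$ recovers $a_n M(t)$, and $M$ is non-degenerate so $M(t)>0$ for small $t>0$. Hence $\|g_a\|_{\mathcal{P}} := \|a\|_\infty$ is a well-defined norm, and $(\mathrm{span}\,\mathcal{P}, \|\cdot\|_{\mathcal{P}})$ is isometrically isomorphic to $\ell_\infty$, therefore complete. The domination inequality \eqref{eq:dominate-uniform} is exactly \eqref{eq:g-a-bound} with $c = \nu(K)$ where $S\subset KB_X$.

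The substance is axiom (iii). Fix $\varepsilon>0$. Given $x\in S$, the natural candidate perturbation is $g_{a^{(x)}}$ where $a^{(x)}$ has a small constant value $\varepsilon$ on all coordinates \emph{except} possibly shifting things so that $g_{a^{(x)}}$ is minimized near $x$; but since $g_a\ge 0$ always vanishes at $0$, a perturbation of the form $g_a$ alone cannot have its argmin at an arbitrary $x$. Instead the trick must be to use the translation structure: recall $\Omega_g^S(\delta)$ only needs $x$ to be a \emph{near}-minimizer (within $\delta$), not an exact one. So the plan is: take $a^{(x)}_n = \varepsilon$ for $n > N$ and $a^{(x)}_n = $ something tiny (or zero) for $n\le N$, where $N = N(x)$ is chosen so large that the ``tail'' of $x$ past $N$ is negligible. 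Concretely, with $a$ having value $\varepsilon$ on a cofinite set, $g_a(y)$ roughly controls $\varepsilon\,\sigma(P_N^\perp y)$, which by Lemma~\ref{lem:sigma-well-posed} (strong minimum of $\sigma$ at $0$) and \eqref{eq:diam-omega-0}/\eqref{eq:fi} forces $\|P_N^\perp y\|$ small whenever $g_a(y)$ is small. Since $S$ is bounded, $P_N S$ lies in a bounded subset of the finite-dimensional space $P_N X$, hence is totally bounded; combining, $\Omega_{g_a}^S(3\delta)$ is contained in an $\varepsilon$-neighborhood of a totally bounded set, giving $\alpha(\Omega_{g_a}^S(3\delta))\le\varepsilon$. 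The value $x\in\Omega_{g_a}^S(\delta)$ holds provided $g_a(x) \le \inf_S g_a + \delta$; since $\inf_S g_a\ge 0$ and $g_a(x)$ can be made $\le\delta$ by taking $N$ large (so $g_a(x) = \varepsilon\sigma(P_N^\perp x)\to 0$ — using that $(e_n)$ is a basis of $X$, shown at the end of Section~\ref{sec:Orlicz}, together with \eqref{eq:sigma-finite}), this is arranged.

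The one delicate point: $\delta$ must be chosen \emph{uniformly} in $x\in S$ before $N(x)$ is chosen (the axiom quantifies $\exists\delta\,\forall x\,\exists g$). So I would first fix $\delta>0$ small depending only on $\varepsilon$ — small enough that $3\delta/\varepsilon$ is below the threshold where $\varphi$ and $\nu$-type estimates kick in, i.e. using \eqref{eq:fi} pick $\delta$ with $\varphi(t)\le\varepsilon$ for $t$ up to the relevant level, and use boundedness of $S$ to get a fixed $K$ with $S\subset KB_X$. Then for each $x$, since $\|P_N^\perp x\|\to 0$, I pick $N(x)$ with $\nu(\|P_N^\perp x\|) \le \delta/\varepsilon$ (legitimate because $\nu$ is finite and $\nu(t)\to 0$ as $t\to 0$, which follows from \eqref{eq:nu} together with \eqref{F}). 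Then $g_{a^{(x)}}(x) = \varepsilon\sigma(P_{N}^\perp x) \le \varepsilon\nu(\|P_N^\perp x\|) \le \delta$, so $x\in\Omega^S_{g_{a^{(x)}}}(\delta)$; and for $y\in\Omega^S_{g_{a^{(x)}}}(3\delta)$ we get $\varepsilon\sigma(P_N^\perp y)\le g_{a^{(x)}}(y)\le 3\delta$, hence $\sigma(P_N^\perp y)\le 3\delta/\varepsilon$, hence by \eqref{eq:fi} $\|P_N^\perp y\| = \|y - P_N y\|\le\varphi(3\delta/\varepsilon)\le\varepsilon$ after shrinking $\delta$ accordingly. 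Thus every $y\in\Omega^S_{g_{a^{(x)}}}(3\delta)$ is within $\varepsilon$ of the totally bounded set $P_N S$, giving $\alpha(\Omega^S_{g_{a^{(x)}}}(3\delta))\le\varepsilon$. The main obstacle I anticipate is precisely bookkeeping this order of quantifiers — making sure $\delta$ is extracted first from $\varepsilon$ via the moduli $\nu$ and $\varphi$, uniformly over the bounded set $S$ — and confirming the inequality $\sigma(P_N^\perp y)\le g_{a^{(x)}}(y)$ holds on the nose when $a^{(x)}_n = \varepsilon$ for $n>N$ and $0$ for $n\le N$ (it does, since then $g_{a^{(x)}}(y) = \varepsilon\sum_{n>N}M(|y_n|) = \varepsilon\sigma(P_N^\perp y)$).
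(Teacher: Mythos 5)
Your proof is correct and follows essentially the same route as the paper: fix $\delta$ uniformly over $S\subset KB_X$ via the modulus $\varphi$, then for each $x$ cut at a coordinate $N(x)$ beyond which $\sigma(P_N^\perp x)$ is small, take $a$ equal to $\varepsilon$ on the tail coordinates (the paper puts a tiny $\theta>0$ with $\theta\nu(K)<\delta/2$ on the first $N$ coordinates where you put $0$ --- both work), and bound $\alpha(\Omega_g^S(3\delta))$ by showing such $y$ satisfy $\|P_N^\perp y\|\le\varphi(3\delta/\varepsilon)<\varepsilon$ and hence lie within $\varepsilon$ of the totally bounded set $P_N S$. One pedantic remark: on a general closed bounded $S$ one only knows $\inf_S g\ge 0$, so $y\in\Omega_g^S(3\delta)$ gives $g(y)\le\inf_S g+3\delta\le g(x)+3\delta\le 4\delta$ rather than $3\delta$; this is harmless (choose $\delta$ with $\varphi(4\delta/\varepsilon)\le\varepsilon$), and the paper avoids it by reducing to the ball $KB_X$, where $g(0)=0$ gives $\inf g=0$.
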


\begin{proof}
    It is clear that it is enough to check the axioms of Definition~\ref{pert_sp} for arbitrary ball. Thus, let $S\subset KB_X$ for some $K>0$. That the functions from $\mathrm{span}\,\mathcal{P}$ are continuous on $S$ is checked in Lemma~\ref{lem:Lips}. The axiom $\mathrm{(\i)}$ is clear and $\mathrm{(\i\i)}$ follows from \eqref{eq:g-a-bound}.

    To check $\mathrm{(\i\i\i)}$, let $\varepsilon > 0$ be arbitrary.  Due to \eqref{eq:fi} there is $\delta > 0$ such that
    \begin{equation}
        \label{eq:iii:delta}
        \varphi\left(\frac{3\delta}{\varepsilon}\right) < \varepsilon.
    \end{equation}
    Let $\theta\in(0,\varepsilon)$ be such that
    \begin{equation}
        \label{eq:iii:theta}
        \theta\nu(K) < \delta/2.
    \end{equation}
    Fix an arbitrary $x\in S$. Hence, $\|x\| \le K$.

    Let  $N\in\mathbb{N}$ be such that
    \begin{equation}
        \label{eq:iii:N}
        \varepsilon\sigma(P_N^\bot x) < \delta/2.
    \end{equation}
    Define
    $$
        \bar a := \theta\sum_{i=1}^N e_i + \varepsilon\sum_{i=N+1}^\infty e_i,
    $$
    and
    $$
        g(y) := g_{\bar a} (y) = \theta \sigma(P_Ny) + \varepsilon\sigma(P_N^\bot y).
    $$
    We will check that this $g$ satisfies \eqref{eq:def:perturb:c} for the fixed $x$.

    Obviously, $\|g\|_\mathcal{P} = \|\bar a\|_\infty = \varepsilon$.

    Also, $\sigma (P_Nx) \le \sigma(x)\le \nu(K)$ and $\theta\sigma (P_Nx) < \delta/2$ by \eqref{eq:iii:theta}, so using \eqref{eq:iii:N}, we get $g(x) < \delta$. Since $g(0) = 0$ and, therefore, $\min_S g = 0$, we get $x\in\Omega_g^S(\delta)$.

    Let $y\in \Omega_g^S(3\delta)$, that is, $\|y\|\le K$ and  $g(y)\le 3\delta$. In particular, $\varepsilon\sigma(P_N^\bot y) \le 3\delta \iff P_N^\bot y \in \Omega_\sigma (3\delta/\varepsilon)$. From  \eqref{eq:iii:delta}, \eqref{eq:fi} and $0\in \Omega_\sigma (3\delta/\varepsilon)$ it follows that $\|P_N^\bot y\|<\varepsilon$. Since $P_Ny\in P_NS$ and, of course, $y= P_Ny + P_N^\bot y$, it follows that the distance of $y$ to the compact set $P_NS$ is less than $\varepsilon$.

    Since $y\in \Omega_g^S(3\delta)$ was arbitrary, we get $\alpha(\Omega_g^S(3\delta)) \le \varepsilon$ and \eqref{eq:def:perturb:c} is verified.
\end{proof}

\begin{thm}\label{thm:perturb-general}
    Let $M$ be a non-degenerate Orlicz function satisfying $\Delta_2$ condition at zero and let $f:\ell_M\to\mathbb{R}\cup\{+\infty\}$ be proper lower semicontinuous and bounded below function. Then for each $\varepsilon>0$ there is $a\in\ell_\infty$ such that $\|a\|_\infty< \varepsilon$ and
    the function $f+g_a$
    attains its minimum on $\ell_M$.
\end{thm}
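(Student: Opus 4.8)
The plan is to lift the perturbation result on bounded sets --- Lemma~\ref{petsp} combined with Theorem~\ref{min} --- to all of $\ell_M$ by first turning the problem into a coercive one. Since $f$ is proper and bounded below, set $m:=\inf_{\ell_M}f\in\R$ and fix a point $x_0\in\dom f$. The key preliminary step is to reserve a fixed fraction of the perturbation budget inside the defining function: put
$$
  h:=f+\tfrac{\varepsilon}{2}\sigma .
$$
Then $h$ is still proper (its domain contains $x_0$), bounded below by $m$, and --- because $\sigma(x)>\|x\|$ as soon as $\|x\|>1$ by \eqref{F} --- coercive.

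Next I would fix once and for all a radius $K\ge\max\{1,\|x_0\|\}$ large enough that $m+\tfrac{\varepsilon}{2}K>f(x_0)+\varepsilon\,\nu(\|x_0\|)$, which is possible because the right-hand side does not depend on $K$; then set $S:=KB_X$, a non-empty closed bounded subset of $X=\ell_M$. On $S$ the function $\sigma$ is finite and Lipschitz by Lemma~\ref{lem:Lips}, so $h|_S$ is proper, lower semicontinuous and bounded below. By Lemma~\ref{petsp}, $(\mathcal{P},\|\cdot\|_{\mathcal{P}})$ is a perturbation space on $S$, hence Theorem~\ref{min} applied to $h$ on $S$ with tolerance $\varepsilon/2$ yields $g_b\in\mathcal{P}$, i.e. $b\in\ell_\infty^+$, with $\|g_b\|_{\mathcal{P}}=\|b\|_\infty<\varepsilon/2$, such that $h+g_b$ attains its minimum on $S$, say at $x^*$.

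Now put $a:=\tfrac{\varepsilon}{2}(1,1,1,\dots)+b\in\ell_\infty$, so that $a_n=\tfrac{\varepsilon}{2}+b_n\in[\tfrac{\varepsilon}{2},\varepsilon)$, whence $\|a\|_\infty<\varepsilon$, and $f+g_a=h+g_b$. It remains to upgrade ``$x^*$ minimizes $f+g_a$ over $S$'' to ``over all of $\ell_M$''. For $x\in S$ this is clear. For $\|x\|>K$ we have, using $g_b\ge0$ and $\sigma(x)>\|x\|>K$ (by \eqref{F}), that $(f+g_a)(x)\ge m+\tfrac{\varepsilon}{2}\sigma(x)>m+\tfrac{\varepsilon}{2}K$; by the choice of $K$ together with \eqref{eq:g-a-bound} in the form $g_a(x_0)\le\|a\|_\infty\nu(\|x_0\|)<\varepsilon\,\nu(\|x_0\|)$, the quantity $m+\tfrac{\varepsilon}{2}K$ exceeds $f(x_0)+\varepsilon\,\nu(\|x_0\|)>(f+g_a)(x_0)\ge(f+g_a)(x^*)$, the last step because $x_0\in S$. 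Hence $(f+g_a)(x)>(f+g_a)(x^*)$ for every $x\notin S$, so $x^*$ is a global minimizer of $f+g_a$ on $\ell_M$, and $\|a\|_\infty<\varepsilon$, as required.

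The only non-routine point is precisely this passage from a ball to the whole space: Theorem~\ref{min} and Lemma~\ref{petsp} deliver a minimizer only on a prescribed bounded set, while $f$ being merely bounded below gives no control at infinity, so a separate coercivity argument is unavoidable. Peeling off $\tfrac{\varepsilon}{2}\sigma$ from the budget before invoking the abstract principle, and then choosing $K$ --- which depends only on $\varepsilon$, $m$, $x_0$ and $\nu$, hence can be fixed before $b$ --- so large that the reserved coercive term dominates outside $S$, is what makes it go through. Note that the $\Delta_2$ hypothesis enters only through Lemma~\ref{petsp} (via $\varphi(t)\to0$) and through the finiteness of $\nu$ in \eqref{eq:nu}. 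One should also check the trivial compatibility items: that $h|_S$ inherits lower semicontinuity from $f$ and the continuity of $\sigma$ on $KB_X$, and that $K\ge\|x_0\|$ keeps $x_0\in S$ so that $h|_S$ is proper.
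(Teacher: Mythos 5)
Your proof is correct and follows essentially the same route as the paper: reserve a small multiple of $\sigma$ from the budget to make the problem coercive, apply Lemma~\ref{petsp} together with Theorem~\ref{min} on a large ball $KB_X$ chosen independently of the perturbation, and then verify that the minimizer on the ball is in fact global. The paper merely organizes this in two steps (first the coercive case, using $g_a\le\sigma$ for $\varepsilon<1$, then the reduction via $f_1=f+\theta\sigma$), while you do it in one pass with the $\nu(\|x_0\|)$ bound; the substance is the same.
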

\begin{proof}
    Fix $\varepsilon\in(0,1)$.     Assume first that
    \begin{equation}
        \label{eq:f-to-infty}
        \lim_{\|x\|\to\infty} f(x) = \infty.
    \end{equation}
    Fix $x_0\in\mathrm{dom}\,f$ and $K > \|x_0\|$ be so big that
    $$
        \|x\|\ge K\Rightarrow f(x) > f(x_0) + \sigma(x_0).
    $$
    Set $S:=KB_X$. From Lemma~\ref{petsp} and Theorem~\ref{min} there is $a\in\ell_\infty^+$ such that $\|a\|_\infty < \varepsilon$ and $f+g_a$ attains its minimum on $S$. Let $\bar x\in S$ be such that
    $$
        f(x) + g_a(x) \ge f(\bar x) + g_a(\bar x),\quad\forall x\in S.
    $$
    But for $x\not\in S$, that is, $\|x\| > K$, we have $f(x) + g_a(x) \ge f(x) > f(x_0) + \sigma(x_0)$. Since $a_n\le 1$ for all $n\in\mathbb{N}$, we have $\sigma \ge g_a$, so $f(x) + g_a(x) \ge f(x_0) + g_a(x_0) \ge f(\bar x) + g_a(\bar x)$, because $x_0\in S$. That is, $\bar x$ is a global minimum of $f+g_a$.

    Now, if $f$ is only bounded below, fix $\theta \in (0,\varepsilon/2)$ and consider
    $$
        f_1(x) := f(x) + \theta \sigma(x).
    $$
    From \eqref{F} it follows that $\sigma(x)\to\infty$ as $\|x\|\to\infty$, so $f_1$ satisfies \eqref{eq:f-to-infty} and from the first part of this proof there is $a'\in\ell_\infty^+$ such that $\|a'\|_\infty<\theta$ and $f_1 + g_{a'}$ attains its minimum. Then $a:=(\theta+a_n')_{n=1}^\infty$ satisfies the claim.
\end{proof}
We are now ready to present our new tool for no-renorming results.
\begin{thm}\label{thm:perturb-strict}
    Let $M$ be a non-degenerate Orlicz function satisfying $\Delta_2$ condition at zero and let $f:\ell_M\to\mathbb{R}\cup\{+\infty\}$ be proper, lower semicontinuous and bounded below function with bounded domain.

    Then for any $0<\delta<\varepsilon$ there is $a\in\ell_\infty$ such that
    \begin{equation}
        \label{eq:a-bounds}
        \delta \le a_n \le \varepsilon,\quad\forall n\in\mathbb{N}
    \end{equation}and
    the function $f-g_a$
    attains its minimum on $\ell_M$.
\end{thm}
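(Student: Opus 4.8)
The plan is to deduce this from Theorem~\ref{thm:perturb-general} applied to an auxiliary function of the form $f-c\sigma$, where $c$ is the midpoint of the interval $[\delta,\varepsilon]$. The point is that $f-c\sigma$ is again proper, lower semicontinuous and bounded below — bounded below precisely because $\dom f$ is bounded — so the general perturbation result applies and produces a small perturbation $g_b$ of $\sigma$; absorbing $-c\sigma+g_b$ into a single $-g_a$ then yields the claim, with $a$ ranging exactly over $[\delta,\varepsilon]$ provided $\|b\|_\infty$ is small enough.

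Concretely, first I would use the boundedness of $\dom f$ to fix $K>0$ with $\dom f\subset KB_X$. On $KB_X$ the function $\sigma=g_{(1,1,\ldots)}$ is finite by \eqref{eq:sigma-finite} (equivalently \eqref{eq:nu}) and Lipschitz by Lemma~\ref{lem:Lips}. Putting $c:=(\delta+\varepsilon)/2$ and $r:=(\varepsilon-\delta)/2>0$, I would define $h:\ell_M\to\mathbb{R}\cup\{+\infty\}$ by $h(x):=f(x)-c\sigma(x)$ on $\dom f$ and $h(x):=+\infty$ elsewhere — equivalently $h=f+\psi$ with $\psi:=-c\sigma$ on $KB_X$ and $\psi:=+\infty$ off $KB_X$. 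Then I would check that $h$ is proper, lower semicontinuous and bounded below: properness is immediate since $\dom h=\dom f\ne\varnothing$; boundedness below follows from $h(x)\ge\inf_{\ell_M}f-c\,\nu(K)>-\infty$ for $x\in\dom f$; and lower semicontinuity follows since $KB_X$ is closed and $\sigma$ is continuous on it, so $\psi$ is lower semicontinuous on $\ell_M$, whence $h=f+\psi$, a sum of two lower semicontinuous functions bounded below, is lower semicontinuous.

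Next I would apply Theorem~\ref{thm:perturb-general} to $h$ with perturbation level $r$ in place of $\varepsilon$: this gives $b\in\ell_\infty$ with $\|b\|_\infty<r$ such that $h+g_b$ attains its minimum on $\ell_M$. I would then set $a:=(c-b_n)_{n=1}^\infty\in\ell_\infty$, so that $|a_n-c|=|b_n|<r$ for every $n$, hence $a_n\in(\delta,\varepsilon)\subset[\delta,\varepsilon]$, which is \eqref{eq:a-bounds}. Finally, on $\dom f$ all the series involved are finite (as $\sigma\le\nu(K)$ there), so termwise $c\sigma(x)-g_b(x)=\sum_{n}(c-b_n)M(|x_n|)=g_a(x)$, giving $h+g_b=f-g_a$ on $\dom f$; since both sides equal $+\infty$ off $\dom f$, we conclude $f-g_a=h+g_b$ on all of $\ell_M$, and therefore $f-g_a$ attains its minimum on $\ell_M$, as required.

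The only point needing a little care is the verification that $h=f-c\sigma$ is bounded below and lower semicontinuous, and this is exactly where the boundedness of $\dom f$ together with the $\Delta_2$ condition enter — the latter through \eqref{eq:sigma-finite} and Lemma~\ref{lem:Lips}, which supply finiteness and continuity of $\sigma$ on bounded sets. Beyond this I do not expect any genuine obstacle; the rest is bookkeeping, and no further use of the structure of $\ell_M$ is needed than what Theorem~\ref{thm:perturb-general} already provides.
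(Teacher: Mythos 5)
Your proof is correct and follows essentially the same route as the paper: subtract a fixed multiple of $\sigma$ from $f$ (which is bounded below because $\dom f$ is bounded and $\sigma$ is bounded there by \eqref{eq:sigma-finite}), apply Theorem~\ref{thm:perturb-general}, and recombine the small perturbation into a single $g_a$. The only deviation is cosmetic: you center at $c=(\delta+\varepsilon)/2$ with radius $(\varepsilon-\delta)/2$, so you need not use that the coefficients from Theorem~\ref{thm:perturb-general} are nonnegative, whereas the paper subtracts $\varepsilon\sigma$ and takes $a'\in\ell_\infty^+$ with $\|a'\|_\infty<\varepsilon-\delta$, setting $a_n=\varepsilon-a'_n$.
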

\begin{proof}
    Consider
    $$
        f_1(x) := f(x) - \varepsilon\sigma(x).
    $$
    Since $\sigma(x)$ is bounded on the bounded set $\mathrm{dom}\, f$, see \eqref{eq:sigma-finite}, the function $f_1$ is bounded below. From Theorem~\ref{thm:perturb-general} there is $a'\in\ell_\infty^+$ such that $\|a'\|_\infty < \varepsilon-\delta$ and $f_1+g_{a'}$ attains its minimum. Set
    $$
        a:=(\varepsilon - a_n')_{n=1}^\infty.
    $$
    Obviously, $a$ satisfies \eqref{eq:a-bounds} and
    $$
        f - g_a = f_1 + g_{a'}.\qedhere
    $$
\end{proof}
Note that a variant of Stegall Variational Principle for an Orlicz space such that $M$ satisfies $\Delta_2$ condition at zero (it is well known that such space is dentable, or, which is equivalent, has Radon-Nikodym property, see \cite{Phelps}) follows almost immediately from Theorem~\ref{thm:perturb-strict}. Indeed, if $\bar x$ is the point of minimum of $f-g_a$ then, assuming that $f(\bar x) = g_a(\bar x)$, we have $f\ge g_a$ and so a subdifferential of $g_a$ at $\bar x$, say $p$, will be a supporting functional to $f$, that is, $f(x)\ge f(\bar x) + p(x-\bar x)$. From \eqref{eq:g-a-Lipsch} it follows that $\|p\| < 2 \varepsilon \nu(\sup\|\mathrm{dom}\, f\| +2)$, which can be made arbitrary small.

Of course, we will not get that $\bar x$ is a strong minimum of $f-p$, but even this weaker version of Stegall Variational Principle is still equivalent to the dentability of the space, see \cite{fabine}. If one wants to get directly a strong minimum, one can regularise in advance $M$ to be strictly convex, see e.g.~\cite{LT1}.

The important point here is that the $\Delta_2$ assumption can not be just dropped in Theorems~\ref{thm:perturb-general}~and~\ref{thm:perturb-strict}.

\section{Applications}\label{sec:appl}

In this section we will demonstrate on several examples how Theorem~\ref{thm:perturb-strict} can be applied. We start with the simplest of our cases, see e.g. \cite[Theorem~II.5.3]{DGZ}. Note that most part of the proof is just checking  that the canonical norm of $\ell_1$ is nowhere Fr\'echet differentiable.

\begin{prop}\label{prop:l1}
On $\ell_1$ there is no non-trivial Fr\'echet differentiable bump.
\end{prop}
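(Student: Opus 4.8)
The plan is to argue by contradiction using Theorem~\ref{thm:perturb-strict} with $M(t)=t$, so that $\ell_M=\ell_1$ and $\sigma_M=\|\cdot\|$ is precisely the norm. Suppose $b:\ell_1\to\R$ is a nontrivial bump that is everywhere Fr\'echet differentiable; normalising, we may assume $b(0)\neq0$ and $b(x)=0$ whenever $\|x\|\ge r$ for some $r>0$. Consider the function
$$
  f(x):=\begin{cases} b(x)^{-2}, & b(x)\neq 0,\\ +\infty, & b(x)=0.\end{cases}
$$
Then $f$ is proper (finite at $0$), lower semicontinuous (since $b$ is continuous), bounded below (by $0$), and has bounded domain, namely $\dom f\subset rB_{\ell_1}$. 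So Theorem~\ref{thm:perturb-strict} applies: fixing, say, $\delta=1,\ \varepsilon=2$, there is a sequence $a=(a_n)$ with $1\le a_n\le 2$ for all $n$ such that $f-g_a$ attains its minimum on $\ell_1$ at some point $\bar x$. Necessarily $b(\bar x)\neq 0$.

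Next I would extract a first-order condition at $\bar x$. Since $f-g_a$ has a minimum at $\bar x$ and $f=b^{-2}$ is Fr\'echet differentiable near $\bar x$ (as $b$ is and $b(\bar x)\neq 0$), the only obstacle to subtracting a derivative is that $g_a(y)=\sum a_n|y_n|$ is not differentiable. But a minimum condition still gives, coordinatewise, an inequality: comparing $\bar x$ with $\bar x\pm te_n$ for small $t>0$ and using differentiability of $f$ in the direction $e_n$ yields
$$
  \bigl|\partial_n f(\bar x)\bigr| \ge a_n \ge 1,\qquad \forall n\in\N,
$$
where $\partial_n f(\bar x)=\langle f'(\bar x),e_n\rangle$. (Concretely: $f(\bar x+te_n)-g_a(\bar x+te_n)\ge f(\bar x)-g_a(\bar x)$ forces $t\,\partial_n f(\bar x)+o(t)\ge a_n(|\bar x_n+t|-|\bar x_n|)$, and letting $t\downarrow0$ along $+t$ and $-t$ gives the two-sided bound.) This is the crux of the argument.

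Finally I would derive the contradiction from $f'(\bar x)\in\ell_1^*=\ell_\infty$, i.e. $f'(\bar x)$ is a \emph{bounded} sequence. Indeed Fr\'echet (even G\^ateaux) differentiability of $b$ at $\bar x$ gives $f'(\bar x)=-2b(\bar x)^{-3}b'(\bar x)\in\ell_\infty$, so $\sup_n|\partial_n f(\bar x)|<\infty$, which flatly contradicts $|\partial_n f(\bar x)|\ge 1$ for all $n$. Hence no such bump exists. The one point requiring a little care -- and the main obstacle -- is justifying the coordinatewise first-order inequality above in the presence of the nonsmooth term $g_a$; this is handled purely one-dimensionally, since along the line $t\mapsto \bar x+te_n$ the function $g_a$ reduces to $a_n|\bar x_n+t|+\text{const}$, whose one-sided derivatives at $t=0$ are $\pm a_n$, and $f$ is genuinely differentiable there.
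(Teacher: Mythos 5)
Your setup (the function $f=b^{-2}$, the application of Theorem~\ref{thm:perturb-strict} with $\delta=1$, $\varepsilon=2$, and the coordinatewise first-order analysis at the minimum point $\bar x$) is sound; in fact the inequality $f(\bar x+te_n)-f(\bar x)\ge a_n(|\bar x_n+t|-|\bar x_n|)$ does give $\partial_n f(\bar x)=a_n\,\mathrm{sign}(\bar x_n)$ when $\bar x_n\neq0$ (and an immediate contradiction when $\bar x_n=0$), so $|\partial_n f(\bar x)|\ge a_n\ge 1$ for all $n$ is correct. The gap is in your final step: this does \emph{not} contradict $f'(\bar x)\in\ell_1^*=\ell_\infty$. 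The dual of $\ell_1$ is $\ell_\infty$, not $c_0$, so a bounded linear functional may very well have all coordinates of absolute value between $1$ and $2$ --- for instance $(a_n\,\mathrm{sign}(\bar x_n))_n$ itself is such a functional, being exactly a subgradient of $g_a$ at $\bar x$. In other words, nothing you have used beyond this point goes past G\^ateaux differentiability along the coordinate directions, and that cannot possibly suffice: $\ell_1$, being separable, does admit G\^ateaux differentiable bumps, so any correct proof must exploit the \emph{uniformity} over directions that Fr\'echet differentiability provides.

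To close the gap you must use the Fr\'echet remainder at increments whose size is tied to the coordinate index. For example, since $\bar x_n\neq0$ for all $n$ and $\bar x_n\to0$, test $h_n:=-2\bar x_ne_n$: minimality gives $f(\bar x+h_n)-f(\bar x)\ge g_a(\bar x+h_n)-g_a(\bar x)=0$, while $\langle f'(\bar x),h_n\rangle=-2a_n|\bar x_n|$, so the Fr\'echet remainder at $h_n$ is at least $2a_n|\bar x_n|\ge\|h_n\|$, contradicting $o(\|h_n\|)$ as $\|h_n\|=2|\bar x_n|\to0$. The paper's own proof achieves the same effect slightly differently: it shows that Fr\'echet differentiability of $f$ at $\bar x$ forces the symmetric quotient $\bigl(g_a(\bar x+h)+g_a(\bar x-h)-2g_a(\bar x)\bigr)/\|h\|$ to tend to $0$, and then contradicts this by taking $h=te_n$ with $n$ large (so that $|\bar x_n|<t$), where the quotient is at least $2(t-|\bar x_n|)/t$, close to $2$. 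Either repair works, but the step you call the crux is not where the difficulty lies; the difficulty is precisely the passage from the coordinatewise (G\^ateaux) information to a genuine use of the Fr\'echet hypothesis.
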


\begin{proof}
    Suppose $b$ is such a bump. Consider
    \begin{equation}
        \label{eq:b-to-f}
        f(x) := \begin{cases}
            b^{-2}(x),\quad b(x)\neq 0,\\
            \infty,\quad\text{otherwise}.
        \end{cases}
    \end{equation}
    It is clear that $f$ is proper, lower semicontinuous function which is positive and  Fr\'echet differentiable at each point of $\mathrm{dom}\, f$ and the latter is bounded set.

    Applying Theorem~\ref{thm:perturb-strict} with $\delta=1$ and $\varepsilon=2$  we get
    $a\in \ell_\infty^+$ such that $1\le a_n \le 2$ for all $n\in\mathbb{N}$, and
    $\bar x\in \mathrm{dom}\,f$ such that $f-g_a$ attains its minimum at $\bar x$. In particular
    \begin{equation}
        \label{eq:f-g-bar}
        f(x) - f(\bar x) \ge g_a(x) - g_a(\bar x),
    \end{equation}
    for all $x\in \ell_1$.
    In particular, for any $h\in \ell_1$, $h\neq 0$,
    $$
    \frac{f(\bar x+h)+f(\bar x-h)-2f(\bar x)}{\|h\|} \ge \frac{g_a(\bar x+h)+g_a(\bar x-h)-2g_a(\bar x)}{\|h\|}.
    $$
    Letting $h$ to $0$, the left hand side of the above tends to zero, because $f$ is Fr\'echet differentiable at $\bar x$, whilst the right hand side is always positive, because $g_a$ is convex. Therefore,
    \begin{equation}\label{cdc}
        \lim_{\|h\|\rightarrow 0}\frac{g_a(\bar x+h)+g_a(\bar x-h)-2g_a(\bar x)}{\|h\|} = 0.
    \end{equation}
    We get a contradiction by checking that
    $$
    \limsup _{\|h\|\rightarrow 0}\frac{g_a(\bar x+h)+g_a(\bar x-h)-2g_a(\bar x)}{\|h\|} \ge 2.
    $$
    Indeed, fix $t>0$. For all $n$ large enough $|\bar x_n| < t$, so $|\bar x_n+t| + |\bar x_n-t| -2 |\bar x_n| = 2(t-|\bar x_n|)$, and $g_a(\bar x+te_n)+g_a(\bar x-te_n)-2g_a(\bar x) = a_n(|\bar x_n+t| + |\bar x_n-t| -2 |\bar x_n|) \ge 2(t-|\bar x_n|)$. That is, $\sup_n (g_a(\bar x+te_n)+g_a(\bar x-te_n)-2g_a(\bar x)) \ge 2t $.
\end{proof}

\begin{prop}
    \label{prop:lp}
    Let $M$ be non-degenerate Orlicz function satisfying $\Delta_2$ condition at zero, and such that for some $p\in(1,2]$
    \begin{equation}
        \label{eq:Mtp}
        \limsup_{t\searrow 0}\frac{M(t)}{t^p} = \infty.
    \end{equation}
    Then on $\ell_M$ there is no nontrivial Fr\'echet differentiable bump with estimate
    \begin{equation}
        \label{eq:b-p-estimate}
        b(x+h) = b(x) + \langle b'(x),h\rangle + O(\|h\| ^p)
    \end{equation}
    for all $x\in\ell_M$.
\end{prop}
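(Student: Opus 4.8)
The plan is to run the scheme of Proposition~\ref{prop:l1}, with the $\ell_1$-specific computation there replaced by one exploiting \eqref{eq:Mtp}. Assume for contradiction that $b$ is a nontrivial bump on $\ell_M$ satisfying \eqref{eq:b-p-estimate}, and define $f$ from $b$ exactly as in \eqref{eq:b-to-f}. Then $f$ is proper, lower semicontinuous, positive (hence bounded below), and its domain $\{x:b(x)\neq0\}$ is bounded, being contained in $\supp b$. The preliminary point is that $f$ inherits the $p$-estimate on its domain: if $b(x)\neq0$, then on a neighbourhood of $x$ one writes $b(x+h)=b(x)+\langle b'(x),h\rangle+R(h)$ with $R(h)=O(\|h\|^p)$, sets $s:=\langle b'(x),h\rangle+R(h)=O(\|h\|)$, and expands $(b(x)+s)^{-2}=b(x)^{-2}-2b(x)^{-3}s+O(s^2)$; since $p\le2$ we have $s^2=O(\|h\|^2)=O(\|h\|^p)$, so
$$
  f(x+h)=f(x)+\langle f'(x),h\rangle+O(\|h\|^p),\qquad f'(x)=-2b(x)^{-3}b'(x),\quad x\in\dom f.
$$

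Next I would apply Theorem~\ref{thm:perturb-strict} with $\delta=1$, $\varepsilon=2$: there are $a\in\ell_\infty^+$ with $1\le a_n\le2$ for all $n\in\N$ and $\bar x\in\dom f$ such that $f-g_a$ attains its minimum at $\bar x$, whence $f(x)-f(\bar x)\ge g_a(x)-g_a(\bar x)$ for all $x\in\ell_M$. Applying this at $x=\bar x\pm h$, adding, and combining convexity of $g_a$ with the estimate for $f$ at $\bar x$ gives constants $C>0$, $r>0$ with
$$
  0\le g_a(\bar x+h)+g_a(\bar x-h)-2g_a(\bar x)\le f(\bar x+h)+f(\bar x-h)-2f(\bar x)\le C\|h\|^p,\qquad\|h\|\le r,
$$
so the quantity $\Phi(h):=\|h\|^{-p}\bigl(g_a(\bar x+h)+g_a(\bar x-h)-2g_a(\bar x)\bigr)$ stays bounded by $C$ near $0$.

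The contradiction comes from testing $\Phi$ along the basis directions. Put $\kappa:=\|e_1\|\in(0,\infty)$; from the definition of the norm on $\ell_M$ one sees $\|e_n\|=\kappa$ for every $n$, hence $\|te_n\|=t\kappa$ for $t>0$. Since $\bar x\in\ell_M\subset c_0$, for a fixed $t>0$ and all sufficiently large $n$ we have $|\bar x_n|<t$, and then
$$
  g_a(\bar x+te_n)+g_a(\bar x-te_n)-2g_a(\bar x)=a_n\bigl(M(t+|\bar x_n|)+M(t-|\bar x_n|)-2M(|\bar x_n|)\bigr).
$$
As $n\to\infty$, continuity of $M$ forces the bracket to tend to $2M(t)>0$, so (using $a_n\ge1$) $\Phi(te_n)\ge M(t)/(t^p\kappa^p)$ for $n$ large. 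Consequently, for every $\rho>0$,
$$
  \sup_{0<\|h\|\le\rho}\Phi(h)\ \ge\ \frac{1}{\kappa^p}\,\limsup_{t\searrow0}\frac{M(t)}{t^p}\ =\ \infty
$$
by \eqref{eq:Mtp}, contradicting $\Phi\le C$ near $0$.

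I expect no serious obstacle: both computations — the $b^{-2}$ expansion yielding the $p$-estimate for $f$, and the basis-direction identity — are elementary, and $p\le2$ enters only to absorb $\|h\|^2$ into $O(\|h\|^p)$. The one point needing a little care is the order of limits in the last display: given a prescribed large value, one first picks a small $t$ with $M(t)/t^p$ exceeding it (possible by \eqref{eq:Mtp}) and only then chooses $n$ large, so that the points $h=te_n$ witnessing large values of $\Phi$ have norm $t\kappa$ as small as desired. This parallels Proposition~\ref{prop:l1}, where the limiting identity $|\bar x_n+t|+|\bar x_n-t|-2|\bar x_n|\to2t$ now becomes $M(t+|\bar x_n|)+M(t-|\bar x_n|)-2M(|\bar x_n|)\to2M(t)$, the difference being that one divides by $\|h\|^p=t^p\kappa^p$ rather than by $\|h\|=t$.
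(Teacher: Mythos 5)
Your proposal is correct and follows essentially the same route as the paper: pass from $b$ to $f=b^{-2}$, apply Theorem~\ref{thm:perturb-strict} with $a_n\in[1,2]$, transfer the $p$-estimate to the symmetric second differences of $g_a$ at $\bar x$, and contradict \eqref{eq:Mtp} along the directions $te_n$ with $t$ small first and then $n$ large. Your treatment is in fact slightly more careful than the paper's (the explicit $(b(x)+s)^{-2}$ expansion using $p\le2$, and the normalization $\|e_n\|=\kappa$ in the denominator), but these are refinements of the same argument, not a different approach.
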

\begin{proof}
    As in the proof of Proposition~\ref{prop:l1} we define $f$ by~\eqref{eq:b-to-f}. It is immediate to check that \eqref{eq:b-p-estimate} implies similar estimate for $f$. Consequently,
    \begin{equation}
        \label{eq:f-p-estim}
        \limsup_{\|h\|\to0}\frac{f(x+h)+f(x-h)-2f(x)}{\|h\|^p} < \infty,\quad\forall x\in \ell_M.
    \end{equation}
    Further we get $g_a$ with $a_n\ge1$ for all $n\in\mathbb{N}$ such that \eqref{eq:f-g-bar} is fulfilled. Then from \eqref{eq:f-p-estim},
    \begin{equation}\label{eq:g-p-est}
        \limsup_{\|h\|\rightarrow 0}\frac{g_a(\bar x+h)+g_a(\bar x-h)-2g_a(\bar x)}{\|h\|^p} < \infty
    \end{equation}
    at some point $\bar x\in\ell_M$.

    On the other hand, let $t_k\to0$ be such that $M(t_k)> kt_k^p$, see \eqref{eq:Mtp}. Then
    \begin{multline*}
        \sup_{\|h\|=t_k} \frac{g_a(\bar x+h)+g_a(\bar x-h)-2g_a(\bar x)}{\|h\|^p} \ge\\
        \limsup_{n\to\infty} a_n\frac{M(|\bar x_n +t_k|)+M(|\bar x_n -t_k|)-2M(|\bar x_n|)}{t_k^p}\\
        \ge 2\frac{M(t_k)}{t_k^p} > 2k,
    \end{multline*}
 which yields a   contradiction.
\end{proof}

\begin{prop}\label{prop:M''}
If $M$ is a non-degenerate twice  differentiable Orlicz function satisfying $\Delta_2$ condition at $0$, and such that $\ds \lim _{t\searrow 0}M''(t)=\infty $, then on $\ell_M$ there is no twice G\^ateaux differentiable bump.
\end{prop}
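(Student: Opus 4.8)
The plan is to follow the pattern of Propositions~\ref{prop:l1} and~\ref{prop:lp}. Suppose, for contradiction, that $b$ is a nontrivial twice G\^ateaux differentiable bump on $\ell_M$, and define $f$ by~\eqref{eq:b-to-f}. Then $f$ is proper ($\dom f=\supp b\neq\varnothing$), lower semicontinuous, positive (hence bounded below), and $\dom f=\supp b$ is bounded. Moreover, since $t\mapsto t^{-2}$ is $C^\infty$ away from $0$, the chain rule shows that $f=b^{-2}$ is twice G\^ateaux differentiable at every $x\in\dom f$, with second derivative the bounded bilinear form $f''(x)=6b^{-4}(x)\,b'(x)\otimes b'(x)-2b^{-3}(x)\,b''(x)$. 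Thus Theorem~\ref{thm:perturb-strict} applies with $\delta=1$, $\varepsilon=2$: there are $a_n\in[1,2]$ and a point $\bar x\in\dom f$ at which $f-g_a$ attains its minimum, so \eqref{eq:f-g-bar} holds, i.e. $f(x)-f(\bar x)\ge g_a(x)-g_a(\bar x)$ for all $x\in\ell_M$.

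Next I would test \eqref{eq:f-g-bar} along the canonical directions. Fix $n\in\N$. Putting $x=\bar x+se_n$ and $x=\bar x-se_n$ in \eqref{eq:f-g-bar} and adding (only the $n$-th coordinate of $g_a$ changes), we get for every $s>0$
\begin{equation*}
  \frac{f(\bar x+se_n)+f(\bar x-se_n)-2f(\bar x)}{s^2}\ \ge\ a_n\,\frac{M(|\bar x_n+s|)+M(|\bar x_n-s|)-2M(|\bar x_n|)}{s^2}.
\end{equation*}
Letting $s\downarrow0$, the left-hand side tends to $\langle f''(\bar x)e_n,e_n\rangle\le\|f''(\bar x)\|\,\|e_n\|^2=:C$, and $C$ is independent of $n$ because coordinate permutations are isometries of $\ell_M$, so $\|e_n\|=\|e_1\|$. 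For the right-hand side there are two cases. If $\bar x_n=0$, it equals $2a_nM(s)/s^2\ge 2M(s)/s^2$; and the hypothesis $M''(t)\to\infty$ as $t\downarrow0$ forces $M(s)/s^2\to\infty$ (for any $L$ the function $t\mapsto M(t)-\frac{L}{2}t^2$ is twice differentiable with nonnegative second derivative near $0$, hence convex there, vanishes at $0$ and has nonnegative right derivative $M'(0^+)$ at $0$, so it is $\ge0$, giving $M(s)\ge\frac L2 s^2$ for small $s$), so the right-hand side tends to $+\infty$ — impossible, since the left-hand side is bounded by $C$. Hence $\bar x_n\ne0$ for every $n$; then the right-hand side tends to $a_nM''(|\bar x_n|)\ge M''(|\bar x_n|)$, and therefore $M''(|\bar x_n|)\le C$ for all $n$.

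To conclude, $\bar x\in\ell_M\subset c_0$, so $|\bar x_n|\to0$, whence $M''(|\bar x_n|)\to\infty$ by hypothesis, contradicting $M''(|\bar x_n|)\le C$. The only points that need care are: checking that $b^{-2}$ inherits twice G\^ateaux differentiability with a genuinely \emph{bounded} second derivative (a routine chain rule, but this is what makes the $n$-uniform bound $C$ available); extracting $M(s)/s^2\to\infty$ from $M''(t)\to\infty$ near $0$, which is where the $M''$ assumption is actually used in the case $\bar x_n=0$; and observing the uniformity of $C$ in $n$. The one-sided limits of the second difference quotients above are then elementary Peano--Taylor expansions — of $M$ around $|\bar x_n|>0$ on the right, and of $s\mapsto f(\bar x+se_n)$ at $s=0$ on the left — and the $a_n\ge1$ from Theorem~\ref{thm:perturb-strict} is all we need from the perturbation.
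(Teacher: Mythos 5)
Your proposal is correct and follows essentially the same route as the paper's proof: apply Theorem~\ref{thm:perturb-strict} to $f=b^{-2}$, compare second difference quotients along the coordinate directions, rule out $\bar x_n=0$ via $M(s)/s^2\to\infty$, and contradict $\bar x_n\to 0$ with the uniform bound $M''(|\bar x_n|)\le C$. The only (harmless) deviations are that you derive $M(s)/s^2\to\infty$ by the convexity of $t\mapsto M(t)-\tfrac{L}{2}t^2$ near $0$ instead of the paper's $\limsup M'$ dichotomy plus L'H\^opital, and that you spell out the chain rule for $b^{-2}$ explicitly.
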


\begin{proof}
    Again assume the contrary and let $f$, $g_a$ and $\bar x$ be like in \eqref{eq:b-to-f} and \eqref{eq:f-g-bar}. Then $f$ is twice Gat\^eaux differentiable on $\bar x$. Let $K:= \|f''(\bar x)\|$. In particular, the second directional derivatives of $f$ at $\bar x$ in the co-ordinate directions
    \begin{equation}
        \label{eq:co-K}
        f''_{e_n^2}(\bar x) \le K,\quad\forall n\in\mathbb{N}.
    \end{equation}
    Since $f''_{e_n^2}(\bar x) = \lim_{t\to 0} (f(\bar x +t) + f(\bar x-t) -2f(\bar x))/t^2$,  from \eqref{eq:f-g-bar} and \eqref{eq:co-K} it follows that for all $n\in\mathbb{N}$
    \begin{eqnarray*}
        K&\ge&\limsup_{t\to 0}\frac{g_a(\bar x + te_n)+g_a(\bar x - te_n)-2g_a(\bar x)}{t^2}\\
        &\ge& \limsup_{t\to 0}\frac{M(|\bar x_n + t|)+M(\bar x_n - t|)-2M(|\bar x_n|)}{t^2}.
    \end{eqnarray*}
    If $\bar x_n=0$ the latter equals $2M(t)/t^2$. However, $\limsup_{t\to0}M(t)/t^2=\infty$. Indeed, if $c:=\limsup_{t\to0}M'(t)>0$ then by convexity $M(t)\ge ct$. Otherwise by L'H\^opitale's rule, $\lim_{t\to0}2M(t)/t^2 = \lim_{t\to0}M''(t) = \infty$, by assumption. This means that $\bar x_n\neq0$ for all $n\in\mathbb{N}$. But then the last limit superior above is simply $M''(|\bar x_n|)$. So,
    $$
        K\ge M''(|\bar x_n|),\quad \forall n\in\mathbb{N},
    $$
    which yields a contradiction to $\bar x_n\to 0$.
\end{proof}

\medskip

\textbf{Acknowledgements.} The authors express their sincere gratitude to Dr. Milen Ivanov for the fruitful discussions during the research and for  his help and constant support.

\end{document}